\title{Nonzero-sum risk-sensitive stochastic differential games\thanks{The work of the first named author is supported in part by UGC Centre for Advanced Study. The work of the second named author is supported in part by the DST, India 
project no. SR/S4/MS:751/12. The work of the third named author is supported in part by Dr. D. S. Kothari postdoctoral fellowship of UGC.}} 
\author{Mrinal K. Ghosh \footnotemark[2]\ \and K. Suresh Kumar \footnotemark[3] \
\and Chandan Pal \footnotemark[2]}
\begin{document}
\maketitle
\slugger{mms}{xxxx}{xx}{x}{x--x}
\renewcommand{\thefootnote}{\fnsymbol{footnote}}

\footnotetext[2]{Department of Mathematics, Indian Institute of Science, Bangalore-12, India. \; (\email{mkg@math.iisc.ernet.in, chandan14@math.iisc.ernet.in})}
\footnotetext[3] { Department of Mathematics, Indian Institute of Technology Bombay, Mumbai-400076, India. \email{suresh@math.iitb.ac.in}}

\begin{abstract}
\noindent We study two person nonzero-sum stochastic differential games with risk-sensitive discounted and ergodic cost criteria. Under certain conditions we establish a Nash equilibrium in Markov strategies for the discounted cost criterion and a Nash equilibrium in stationary strategies  for the ergodic cost criterion. We achieve our results by studying the relevant systems of coupled HJB equations.
\end{abstract}

\begin{keywords}
Risk-sensitive criterion, controlled diffusion, Coupled HJB equations, Nash equilibrium, Stationary strategy, Eventually stationary strategy.
\end{keywords}

\begin{AMS}
 91A15, 91A23
\end{AMS}

\pagestyle{myheadings}
\thispagestyle{plain}
\markboth{MRINAL K. GHOSH, K. SURESH KUMAR
 AND CHANDAN PAL }{Risk-sensitive stochastic differential games}

\section{Introduction}

We study nonzero-sum risk-sensitive stochastic differential games on the infinite time horizon. In the literature of stochastic differential games, one usually considers the expectation of the integral of costs \cite{Borkar-Ghosh}, \cite{ED}, \cite{Va} etc. This is the so called risk-neutral situation where the players (i.e., the decision makers or controllers) ignore the risk. If the players are risk-sensitive (i.e., risk-averse or risk-seeking), then the most appropriate cost criterion is the expectation of the exponential of the integral of costs. We refer to \cite{No} for an excellent note on risk-sensitive Nash-equilibria. Since the cost criterion is the expectation of the exponential of the integral costs, it is multiplicative as opposed to the additive nature of the cost criterion in the expectation of the integral costs case. Due to this, the analysis of the risk-sensitive case is significantly different from its risk-neutral counterpart. To our knowledge, the risk-sensitive criterion was first introduce by Bellman \cite{RB}, see \cite{PW} and the reference therein. Though this criterion has been studied extensively for stochastic optimal control problems \cite{BN}, \cite{BFN}, \cite{Anup}, \cite{AnupBorkarSuresh}, \cite{FH}, \cite{FM}, \cite{J}, \cite{MR}, \cite{Na},  \cite{Ru}, the corresponding literature in the context of stochastic differential games is rather limited. Some exceptions are \cite{Ba}, \cite{BG}, \cite{EH}. In this paper we address the existence of Nash equilibria for stochastic differential games where the state of the system is governed by controlled diffusion processes. We consider two risk-sensitive cost evaluation criteria: discounted and ergodic. For both criteria, we establish Nash equilibria under certain conditions.

The rest of this paper is organized as follows. Section 2 deals with the problem description. The discounted cost criterion is analyzed in Section 3. Under certain additive structural conditions on the drift vector and the cost functions we establish the existence of a Nash equilibrium in Markov strategies. In Section 4 we study the ergodic case. Under a Lyapunov type stability condition we establish the existence of a Nash equilibrium in stationary strategies. Section 5 contains some concluding remarks.

\section{Problem Description}
For the sake of notational simplicity we treat two player game. The $N$-player game for $N \geq 3$  is analogous. Let $U_i, \ i =1,2$, be  compact metric spaces and $V_i=\mathcal{P}(U_i)$,  the space of probability measures on  $U_i$ with Prohorov topology. Let $$\Bar{b} = (\Bar{b}_1, \cdots , \Bar{b}_d) 
 : \mathbb{R}^d \times U_1 \times U_2 \to  \mathbb{R}^d, $$ $$  
\Bar{r}_k : \mathbb{R}^d \times U_1 \times U_2 \to [0, \ \infty), \, k=1, 2, $$   $$ \sigma : \mathbb{R}^d \to \mathbb{R}^{d \times d}$$ be  functions satisfying the following: \\

\noindent {\bf (A1)} (i) The functions $\Bar{b}, \sigma, \Bar{r}_k, \, k =1,2$, are bounded, Lipschitz continuous in 
the first argument uniformly over the second and third arguments.
Also the functions $\Bar{b}, \ \Bar{r}_k, \, k =1,2$, are (jointly) continuous. 

(ii) The function $a = \sigma \sigma^{\perp} : \mathbb{R}^d \to \mathbb{R}^{d \times d}$ 
is uniformly elliptic, i.e., the infimum of the eigenvalues of $a$ is strictly positive. Here  for a matrix say
$A$, $A^\perp$ denote the transpose of $A$.\\

Define
$b = (b_1, \cdots , b_d)  : \mathbb{R}^d \times V_1 \times V_2 \to \mathbb{R}^d, 
\ r_k : \mathbb{R}^d \times V_1 \times V_2 \to [0, \ \infty)$ by
\interdisplaylinepenalty=0
\begin{eqnarray*}
b_i(x, v_1, v_2) & = & \int_{U_2} \int_{U_1} \Bar{b}_i(x, u_1, u_2) v_1(du_1) v_2(du_2),  \\
r_k(x, v_1, v_2) & = & \int_{U_2} \int_{U_1} \Bar{r}_k (x, u_1, u_2) v_1(du_1) v_2(du_2), 
  x \in \mathbb{R}^d, \\
&&  \ \ \ \  \ \ v_1 \in V_1, \, v_2 \in V_2, \, i = 1, \cdots , d, \, k =1,2.
\end{eqnarray*}
Consider the following controlled diffusion process given by the solution of the stochastic differential equation (s.d.e.) 
\begin{equation}\label{statedynamics}
d X(t) \ = \ b(X(t), v_1(t, X(t)), v_2(t, X(t))) dt + \sigma(X(t)) dW(t), 
\end{equation}
where $W(\cdot)$ is an $\mathbb{R}^d$-valued standard Wiener process, $v_i : [0, \infty) \times \mathbb{R}^d 
\to V_i, \ i =1, 2$ is a measurable function.  
Under (A1), the s.d.e. (\ref{statedynamics}) has a unique weak solution 
which is a strong Markov process for a given initial condition $X(0) =x$; 
see [\cite{AriBorkarGhosh}, Theorem 2.2.12, p.45] for details. 
For the stochastic differential game, the controlled diffusion given by (\ref{statedynamics}) has the
following interpretation. The $i$th player controls the state dynamics, i.e., the controlled
diffusion given above,  through the choice of his strategy $\Bar{v}_i(t) = v_i(t, X(t)), t \geq 0$. 
By an abuse of notation, the measurable map $v_i : [0, \ \infty) \times \mathbb{R}^d \to V_i$ itself
is called a Markov strategy for player $i$. Let
${\mathcal M}_i \ = \ \{ v_i : [0, \ \infty) \times \mathbb{R}^d \to V_i \; | \; v_i \ {\rm is\ measurable}\}$
be the set of all Markov strategies for player $i$.  If $v_i$ doesn't
have explicit dependence on $t$, i.e., $v_i(t, x) = v_i(x), \ x \in \mathbb{R}^d, \ t \geq 0$, it is
said to be a stationary Markov strategy for player $i$. The set of all stationary Markov strategies
for player $i$ is denoted by ${\mathcal S}_i, \ i =1,2$. We topologize ${\mathcal S}_i, \ i=1,2$, using
a metrizable weak* topology on $L^{\infty}(\mathbb{R}^d ; {\mathcal M}_s(U_i))$,
where ${\mathcal M}_s(U_i)$ denotes the space of all signed measures on $U_i$ with weak* topology.
 Since ${\mathcal S}_i$ is a  subset of the unit ball of $L^{\infty}(\mathbb{R}^d ; {\mathcal M}_s(U_i))$,
it is compact under the above weak* topology. One also has the following characterization of the
topology given by the following convergence criterion:\\
For $i =1,2$, 
$v^n_i \to v_i$ in ${\mathcal S}_i$ as $n \to \infty$  if and only if
\begin{equation}\label{convergencecriterion}
\lim_{n \to \infty} \int_{\mathbb{R}^d} f(x) \int_{U_i} g(x, u_i) v^n_i(x)(du_i) dx \ = \  \int_{\mathbb{R}^d} f(x) \int_{U_i} g(x, u_i) v_i(x)(du_i) dx ,
\end{equation}
for all $f \in L^1(\mathbb{R}^d) \cap L^2(\mathbb{R}^d), \ g \in C_b(\mathbb{R}^d \times U_i)$; see
[\cite{AriBorkarGhosh}, p.57] for details. 

Now we define a class of strategies to be referred to as eventually stationary strategies denoted by $\hat{\mathcal S}_i,\; i=1,2$. Let $\Theta >0$. Let
\[
\hat{\mathcal S}_i \ = \ \{ \hat v_i : (0, \Theta) \times \mathbb{R}^d \to V_i \,  \; | \; \hat v_i \ {\rm is\ measurable} \}, \ 
i=1,2.
\]
 We consider the weak* topology  $L^\infty ((0, \Theta) \times {\mathcal M}_s(U_i)), $ on the space $\hat{\mathcal{S}}_i$, introduced by Warga \cite{Warga} for the topology of relaxed controls. Note that with the above topology, $\hat{\mathcal S}_i$ becomes a compact metrizable space with following convergence criterion:\\
For $i =1,2$, 
$\hat v^n_i \to \hat v_i$ in $\hat{\mathcal S}_i$ as $n \to \infty$  if and only if
\interdisplaylinepenalty=0
\begin{eqnarray}\label{convergencecriterion1}
&&\lim_{n \to \infty} \int_{(0, \Theta)}\int_{\mathbb{R}^d} f(\theta, x) \int_{U_i} g(\theta, x, u_i) \hat v^n_i(\theta, x)(du_i) dx d \theta  \\ \nonumber
& =&  \int_{(0, \Theta)}\int_{\mathbb{R}^d} f(\theta, x) \int_{U_i} g(\theta, x, u_i) \hat v_i(\theta, x)(du_i) 
dx d \theta  ,
\end{eqnarray}
for all $f \in L^1((0, \Theta) \times \mathbb{R}^d) \cap L^2((0, \Theta) \times \mathbb{R}^d), 
\ g \in C_b((0, \Theta) \times \mathbb{R}^d \times U_i)$. The Markov strategies associated with 
$\hat v_i \in \hat{\mathcal S}_i, i=1,2$, is given by $\hat v_i(\theta e^{-\alpha t}, X(t)), t \geq 0$, for each 
$\theta \in (0, \Theta)$ and $\alpha > 0$, where $X(t)$ is the solution of the s.d.e.
\[
d X(t) \ = \ b(X(t), \hat v_1(\theta e^{-\alpha t}, X(t)), \hat v_2(\theta e^{-\alpha t}, X(t))) dt + \sigma(X(t)) dW(t).
\]
By an abuse of notation, we represent the eventually stationary Markov strategies by elements of 
$\hat{\mathcal S}_i$, though each member in $\hat{\mathcal S}_i$ corresponds to a family
of Markov strategies indexed by $\theta$ and $\alpha$. Note that as $t \to \infty, \; e^{-\alpha t} \to 0$. Thus in the long run an element of $\hat{\mathcal S}_i$ ``eventually" becomes an element of $\mathcal S_i$ for a fixed $\theta$. Hence the terminology.

We consider two risk-sensitive cost criteria, discounted cost and ergodic cost criteria which we describe now.

\subsection{Discounted cost criterion} Let $\theta \in (0, \ \Theta)$ be the risk-aversion parameter. In the $\alpha$-discounted payoff criterion, $i$th
player chooses his strategy $v_i$ from the set of all Markov strategies ${\mathcal M}_i$
to minimize his risk-sensitive cost given by 
\begin{equation}\label{discountedcost}
\mathcal{J}^{v_1, v_2}_{\alpha, i}(\theta, x) \ := \ \dfrac{1}{\theta} \log  E^{v_1, v_2}_x
\Big[ e^{\theta \int^{\infty}_0 e^{-\alpha t} r_i(X(t), v_1(t,X(t)), v_2(t,X(t)) dt} \Big] , x \in \mathbb{R}^d, 
\end{equation}
where $\alpha > 0$ is the discount parameter,  $X(t)$ is the solution of the s.d.e. (\ref{statedynamics}) corresponding to $(v_1,v_2) \in  \mathcal{M}_1 \times \mathcal{M}_2 $ and $E^{v_1, v_2}_x$ denote the expectation with respect
to the law of the process (\ref{statedynamics}) corresponding to the Markov strategy pair $(v_1, v_2)$ with the initial condition $X(0) = x$.
\vspace{0.1in}
\begin{definition} A pair of strategies $(v^*_1, v^*_2) \in {\mathcal M}_1 \times {\mathcal M}_2$ is said to be a
Nash equilibrium among Markov strategies if 
\interdisplaylinepenalty=0
\begin{eqnarray*}
\mathcal{J}^{v^*_1, v^*_2}_{\alpha, 1}(\theta, x) & \leq & \mathcal{J}^{v_1, v^*_2}_{\alpha,1}(\theta, x) , \ \forall \ 
v_1 \in {\mathcal M}_1, \, x \in \mathbb{R}^d, \\
\mathcal{J}^{v^*_1, v^*_2}_{\alpha, 2}(\theta, x) & \leq & \mathcal{J}^{v^*_1, v_2}_{\alpha, 2}(\theta, x) , \ \forall \ 
v_2 \in {\mathcal M}_2, \, x \in \mathbb{R}^d.
\end{eqnarray*}
\end{definition}
Our main result for discounted risk-sensitive game is establishing the existence of a Nash equilibrium
among the class of eventually stationary strategies. 

\subsection{Ergodic cost criterion}
In this criterion player $i$ chooses his strategy $v_i \in {\mathcal M}_i$ so as to minimize his risk-sensitive
accumulated cost given by
\begin{equation}\label{risksensitivecost}
\rho^{v_1, v_2}_i (\theta, x) \ = \limsup_{T \to \infty} \frac{1}{\theta T} \log E^{v_1, v_2}_x
\Big[ e^{\theta \int^T_0 r_i(X(t), v_1(t,X(t)), v_2(t,X(t)) dt} \Big] , x \in \mathbb{R}^d.
\end{equation}
The definition of  Nash equilibrium is analogous.
We wish to establish the existence of a Nash equilibrium
among the class of  stationary strategies.
\vspace{0.1in}

For both cost criteria, we carry out  analysis by studying the corresponding system of coupled Hamilton-Jacobi-Bellman (HJB) equations. Note that if one of the players, say player $1$, is using a prescribed stationary strategy, then it is a (stochastic) optimal control problem for the other player (player $2$) which has been studied in \cite{Anup}, \cite{AnupBorkarSuresh}. The value function of this stochastic optimal control problem is the unique solution of the corresponding HJB equation. Then a stationary/Markov strategy associated with a minimizing selector of the appropriate Hamiltonian of the HJB equation yields an optimal control of the second player. Thus this stationary/Markov strategy is an optimal response of player $2$ given that player $1$ is employing a prescribed strategy. Therefore for a given pair of stationary/Markov strategies, we obtain a pair of optimal responses of the players via the corresponding HJB equations. Any fixed point of this map gives a Nash equilibrium. This leads us to study a coupled system of HJB equations for each criterion which we describe in forthcoming sections. To this end we first set up the frequently used notations.
\vspace{0.1in}

Denote $\displaystyle{ \sup_{v_1, v_2, x} |r_i(x, v_1, v_2)|}$ by $\|r_i\|_{\infty}, \ i =1,2.$
In general, for $\varphi \in C_b(\mathbb{R}^d),$ the space of all bounded, continuous functions,
we denote for each $B$, a Borel subset of $\mathbb{R}^d$,
\[
\| \varphi \|_{\infty, B} \ = \ \sup_{x \in B} | \varphi (x)|, \ \|\varphi\|_{\infty} = \sup_{x \in \mathbb{R}^d}
| \varphi(x)|.
\]

We define the weighted Sobolev spaces $W^{2, p, \lambda}(\mathbb{R}^d), 1 \leq p < \infty, \lambda >0$
 as follows:
 \interdisplaylinepenalty=0
\begin{eqnarray*}
W^{2, p, \lambda}(\mathbb{R}^d) & = & \{ \varphi : \mathbb{R}^d \to \mathbb{R} | \varphi 
\ {\rm is\ measurable\ and}\  \varphi   e(\lambda) , \frac{\partial \varphi}{\partial x_i}  e(\lambda), \\
&& \ \  \  \  \ \frac{\partial^2 \varphi}{\partial x_i \partial x_j} e(\lambda) \in L^p(\mathbb{R}^d), 
i, j = 1, \cdots , d\},
\end{eqnarray*}
where 
\[
e(\lambda)(x) \ = \ e^{ - \lambda \sqrt{ 1 + \|x\|^2}} , x \in \mathbb{R}^d.
\]
The $W^{2, p, \lambda}$-norm is defined as 
\[
\| \varphi \|^p_{2, p, \lambda} \ = \  \int | \varphi(x)|^p e(\lambda)(x) dx + 
\sum_i \int \Big| \frac{\partial \varphi(x)}{\partial x_i} \Big|^p e(\lambda)(x) dx + 
\sum_{i,j} \int \Big| \frac{\partial^2 \varphi(x)}{\partial x_i x_j} \Big|^pe(\lambda)(x)| dx.
\]
For a Banach space $\mathcal{X}$ with norm $\| \cdot \|_{\mathcal{X}}, \ 1 \leq p <\infty$, define
\[
L^p( t_0, T ; \mathcal{X}) \ = \ \{ \varphi : (t_0 , \ T) \to \mathcal{X} |  \varphi \ {\rm is\ Borel\ measurable\
and} \ \int^T_{t_0} \| \varphi (t) \|^p _{\mathcal{X}} \, dt  < \infty \}
\]
with the norm 
\[
\|\varphi\|_{p; \mathcal{X}} \ = \ \Big[ \int^T_{t_0} \|\varphi (t) \|^p_{\mathcal{X}} \, dt \Big]^{\frac{1}{p}}.
\]
The space ${\mathcal W}^{1,2,p}( (t_0, T) \times \mathbb{R}^d)), t_0 \geq 0$, denotes the set of all
$\varphi \in L^p ( t_0, T ; W^{2, p}(\mathbb{R}^d))$ such that 
$\frac{\partial \varphi}{\partial t} \in L^p ( (t_0, T; L^p(\mathbb{R}^d))$ with the norm
given by
\[
\|\varphi\|^p_{1,2,p; W^{2, p}(\mathbb{R}^d)} \ = \ 
\| \varphi \|^p_{p; W^{2,p}(\mathbb{R}^d)} + \| \frac{\partial \varphi}{\partial t} \|^p_{p; L^p(\mathbb{R}^d)}
, \ 1 \leq p < \infty.
\]
The corresponding weighted Sobolev spaces are defined by 
\[
{\mathcal W}^{1,2,p, \lambda}( (t_0, T) \times \mathbb{R}^d)) \ = \ 
\{ \varphi \in L^p( t_0, T; W^{2, p, \lambda}(\mathbb{R}^d) | \frac{\partial \varphi}{\partial t} \in 
L^p( t_0, T; L^{p, \lambda}(\mathbb{R}^d) \}
\]
with the norm given by 
\[
\|\varphi\|^p_{1,2, p, \lambda ; W^{2, p}(\mathbb{R}^d)} \ = \ 
\| \varphi \|^p_{p, ; W^{2,p, \lambda}(\mathbb{R}^d)} + 
\| \frac{\partial \varphi}{\partial t} \|^p_{p; L^{p, \lambda}(\mathbb{R}^d)}, \ 1 \leq p < \infty.
\]
It is easy to see that the above `regular' parabolic weighted Sobolev spaces can be isometrically 
identified with the  `usual' space-time weighted Sobolev spaces 
$W^{1,2,p, \lambda} ( (t_0, \ T) \times \mathbb{R}^d)$ defined by
\interdisplaylinepenalty=0
\begin{eqnarray*}
W^{1,2, p, \lambda}((t_0, \, T) \times \mathbb{R}^d) & = & 
\{ \varphi : (t_0, T) \times \mathbb{R}^d \to \mathbb{R} | \varphi 
\ {\rm is\ measurable\ and}\  \varphi   e(\lambda) , \frac{\partial \varphi}{\partial x_i}  e(\lambda), \\
&& \frac{\partial \varphi}{\partial t} e(\lambda),
 \frac{\partial^2 \varphi}{\partial x_i \partial x_j} e(\lambda) \in L^p((t_0, T) \times \mathbb{R}^d), 
i, j = 1, \cdots , d\}
\end{eqnarray*}
with norm 
\interdisplaylinepenalty=0
\begin{eqnarray*}
\| \varphi \|^p_{1, 2, p, \lambda} & = &  \int \int | \varphi(t, x)|^p e(\lambda)(x) dx dt  + 
\int \int \Big | \frac{\partial \varphi(t,x)}{\partial t}\Big |^p e(\lambda) dx dt \\
&& + \sum_i \int \int \Big| \frac{\partial \varphi(t,x)}{\partial x_i} \Big|^p e(\lambda)(x) dx  dt+ 
\sum_{ij} \int \int \Big| \frac{\partial^2 \varphi(t,x)}{\partial x_i x_j} \Big|^pe(\lambda)(x)| dx dt.
\end{eqnarray*}
Also the local Sobolev spaces $W^{1,2, p}((t_0 , T) \times \mathbb{R}^d)$ are defined by
\[
W^{1,2, p}_{{\rm loc}}(t_0, T) \times \mathbb{R}^d) \ = \ \{ \varphi : (t_0, T) \times \mathbb{R}^d \to
\mathbb{R}| \, \varphi \ {\rm is\ measurable\ and} \ \varphi \in W^{1,2,p}( (t_0, T) \times B_R), \ R > 0 \}.
\]
The norm $\| \cdot \|_{1,2,p; (t_0, T) \times B_R}$ is defined as 
\interdisplaylinepenalty=0
\begin{eqnarray*}
\|\varphi\|^p_{1,2,p; (t_0, T) \times B_R } & = &  \int^T_{t_0} \int_{B_R} | \varphi(t, x)|^p  dx dt  + 
\int^T_{t_0} \int_{B_R} \Big | \frac{\partial \varphi(t,x)}{\partial t}\Big |^p  dx dt \\
&& + \sum_i \int^T_{t_0} \int_{B_R} \Big| \frac{\partial \varphi(t,x)}{\partial x_i} \Big|^p  dx  dt+ 
\sum_{ij} \int^T_{t_0} \int_{B_R} \Big| \frac{\partial^2 \varphi(t,x)}{\partial x_i x_j} \Big|^p| dx dt,
\end{eqnarray*}
where $B_R$ denotes the open ball of radius $R$ with center $0$ in $\mathbb{R}^d$. 

\section{Analysis of Discounted Cost Criterion} In this section, we consider the discounted cost criterion for the stochastic differential game. We carry out our analysis for the $\alpha$-discounted cost criterion via the criterion 
\begin{equation}\label{discountedcost1}
J^{v_1, v_2}_{\alpha, i}(\theta, x) \ := \   E^{v_1, v_2}_x
\Big[ e^{\theta \int^{\infty}_0 e^{-\alpha t} r_i(X(t), v_1(t,X(t)), v_2(t,X(t)) dt} \Big] .
\end{equation}
Since logarithm is an increasing function, therefore any Nash equilibrium for the criterion (\ref{discountedcost}) is a Nash equilibrium for the above criterion.
The definition of  Nash equilibrium is analogous for the above criterion.

Let $\hat v_i \in \hat{\mathcal S}_i, \ i =1,2$. Corresponding to the cost criterion (\ref{discountedcost1}), the value functions are defined by
\interdisplaylinepenalty=0
\begin{eqnarray*}
\psi^{\hat v_2}_{\alpha, 1}(\theta, x) & = & \inf_{\tilde{v}_1 \in {\mathcal M}_1} J^{\tilde{v}_1, 
{\hat v_2}}_{\alpha, 1}(\theta, x)\\
& = & \inf_{\tilde{v}_1 \in {\mathcal M}_1} E^{\tilde{v}_1, {\hat v_2}}_x
\Big[ e^{\theta \int^{\infty}_0 e^{-\alpha t} r_i(X(t), \tilde{v}_1(t,X(t)), {\hat v_2}(\theta e^{-\alpha t},X(t)) dt}
 \Big], \\
\psi^{\hat v_1}_{\alpha, 2}(\theta, x) & = & \inf_{\tilde{v}_2 \in {\mathcal M}_2} J^{\hat v_1, \tilde{v}_2}_{\alpha, 2} (\theta, x)\\
 & = & \inf_{\tilde{v}_2 \in {\mathcal M}_2} E^{ {\hat v_1} , {\tilde{ v}_2}}_x
\Big[ e^{\theta \int^{\infty}_0 e^{-\alpha t} r_i(X(t), {\hat v_1} (\theta e^{-\alpha t},X(t)), \tilde{ v}_2(t,X(t)) dt}
 \Big], \ \theta \in (0, \ \Theta) , \ x \in \mathbb{R}^d.
\end{eqnarray*}
Now we prove that the above value functions are solutions of the corresponding HJB equations for discounted cost criterion. For a (heuristic) derivation of these HJB equations using multiplicative dynamic programming, we refer to \cite{MR}. We first prove the following.
\vspace{0.1in}
\begin{lemma}\label{appendixlemma-rep1} Assume (A1). Then for ${ \hat v_1} \in \hat{{\mathcal S}}_1 $
 and for each $\kappa \in (0, \ \Theta)$, the p.d.e.
 \interdisplaylinepenalty=0
\begin{eqnarray}\label{auxillarydiscountedhjb1}
\alpha \theta \frac{\partial \psi_{\kappa} }{\partial \theta} & = & 
\inf_{v_2 \in V_2} \Big[ \langle b(x, \hat v_1(\theta, x), v_2), \nabla_x \psi_{\kappa} \rangle + 
\theta r_2(x, \hat v_1(\theta, x), v_2) \psi_{\kappa}  \Big] \nonumber \\ \
&& + \frac{1}{2} \ {\rm trace}(a(x) \nabla^2_x \psi_{\kappa}) , \\ \nonumber 
\psi_{\kappa} (\kappa, x) & = & e^{\frac{\kappa \|r_2\|_{\infty}}{\alpha}}, \theta \in (\kappa, \ \Theta) , \ x \in \mathbb{R}^d, 
\end{eqnarray}
has a unique solution in $\bigcap_{p \geq d+1, \lambda > 0} W^{1,2,p, \lambda}((\kappa , \Theta) \times
\mathbb{R}^d)$  given by
\[
\psi_{\kappa}(\theta, x) \ = \ \inf_{v_2 \in {\mathcal M}_2}
E^{\hat v_1, v_2}_x \Big[e^{\frac{\kappa \|r_2\|_{\infty}}{\alpha}} e^{\theta \int^{T_\kappa}_0 e^{-\alpha t} 
r_2(X(t), \hat v_1(\theta e^{-\alpha t},X(t)), v_2(t, X(t)))dt}\Big],
\]
where $\displaystyle{T_{\kappa} = \frac{\log(\frac{\theta}{\kappa})}{\alpha}}$. 

Similarly, for ${ \hat v_2} \in \hat{{\mathcal S}}_2 $, the p.d.e.
\interdisplaylinepenalty=0
\begin{eqnarray}\label{auxillarydiscountedhjb12}
\alpha \theta \frac{\partial \phi_{\kappa} }{\partial \theta} & = & 
\inf_{v_1 \in V_1} \Big[ \langle b(x,  v_1, \hat v_2(\theta, x) ), \nabla_x \phi_{\kappa} \rangle + 
\theta r_1(x, v_1, \hat v_2(\theta, x) ) \phi_{\kappa}  \Big] \nonumber \\ \
&& + \frac{1}{2} \ {\rm trace}(a(x) \nabla^2_x \phi_{\kappa}) , \\ \nonumber 
\phi_{\kappa} (\kappa, x) & = & e^{\frac{\kappa \|r_1\|_{\infty}}{\alpha}}, \theta \in (\kappa, \ \Theta) , \ x \in \mathbb{R}^d, 
\end{eqnarray}
has a unique solution in $\bigcap_{p \geq d+1, \lambda > 0} W^{1,2,p, \lambda}((\kappa , \Theta) \times
\mathbb{R}^d)$  given by
\[
\phi_{\kappa}(\theta, x) \ = \ \inf_{v_1 \in {\mathcal M}_1}
E^{v_1, \hat v_2}_x \Big[e^{\frac{\kappa \|r_2\|_{\infty}}{\alpha}} e^{\theta \int^{T_\kappa}_0 e^{-\alpha t} 
r_1(X(t), v_1( t,X(t)), \hat v_2(\theta e^{- \alpha t}, X(t)))dt}\Big].
\]
\end{lemma}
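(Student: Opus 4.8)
The key observation is that, once player~1's strategy $\hat v_1$ is fixed, \eqref{auxillarydiscountedhjb1} is precisely the Hamilton--Jacobi--Bellman equation for the finite-horizon risk-sensitive control problem faced by player~2, and the asserted formula is its value function. The plan is therefore threefold: first, recast \eqref{auxillarydiscountedhjb1} as a standard forward parabolic HJB equation by a time change; second, invoke the existence, uniqueness and weighted-Sobolev regularity theory for such single-controller risk-sensitive equations; and third, identify the solution with the stochastic representation by a generalized It\^o--Dynkin argument.

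First I would introduce the change of variable $\theta = \kappa e^{\alpha \tau}$ and set $\tilde\psi(\tau,x) = \psi_\kappa(\kappa e^{\alpha\tau}, x)$. Since $\frac{\partial\tilde\psi}{\partial\tau} = \alpha\theta\,\frac{\partial\psi_\kappa}{\partial\theta}$, equation \eqref{auxillarydiscountedhjb1} becomes a uniformly parabolic HJB equation, forward in $\tau$, on the bounded interval $\tau \in (0, \alpha^{-1}\log(\Theta/\kappa))$, with initial datum $\tilde\psi(0,\cdot) = e^{\kappa\|r_2\|_\infty/\alpha}$. Here it is essential that $\kappa > 0$, so that the coefficient $\alpha\theta$ of the $\theta$-derivative stays bounded away from zero on $(\kappa,\Theta)$ and no degeneracy arises. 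Under (A1) the Hamiltonian $\inf_{v_2\in V_2}\bigl[\langle b(x,\hat v_1(\theta,x),v_2),p\rangle + \theta r_2(x,\hat v_1(\theta,x),v_2)z\bigr]$ is Lipschitz in $(z,p)$ uniformly and jointly measurable in $(\theta,x)$; together with the uniform ellipticity of $a$ this places the equation within the $L^p$ parabolic theory and the weighted-Sobolev framework used for single-controller risk-sensitive problems (cf.\ \cite{Anup}, \cite{AnupBorkarSuresh}), yielding a unique solution in $\bigcap_{p\ge d+1,\lambda>0}W^{1,2,p,\lambda}$ together with a measurable minimizing selector $v_2^*(\theta,x)\in V_2$.

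To obtain the stochastic representation I would apply the Krylov form of It\^o's formula (valid for $W^{1,2,p}_{\mathrm{loc}}$ functions with $p \ge d+1$) to the process
\[
M(t) = \psi_\kappa\bigl(\theta e^{-\alpha t},X(t)\bigr)\,\exp\!\Bigl(\theta\!\int_0^t e^{-\alpha s}r_2\bigl(X(s),\hat v_1(\theta e^{-\alpha s},X(s)),v_2(s,X(s))\bigr)\,ds\Bigr),
\]
for an arbitrary $v_2\in\mathcal M_2$. Writing $s=\theta e^{-\alpha t}$, so that $ds/dt = -\alpha s$, the drift of $M(t)$ equals the strictly positive exponential prefactor multiplied by
\[
-\alpha s\,\frac{\partial\psi_\kappa}{\partial\theta} + \langle b(\cdot,v_2),\nabla_x\psi_\kappa\rangle + s\,r_2(\cdot,v_2)\psi_\kappa + \tfrac12\,\mathrm{trace}(a\nabla^2_x\psi_\kappa),
\]
which by \eqref{auxillarydiscountedhjb1} is nonnegative for every $v_2$ and vanishes for $v_2 = v_2^*$. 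Hence $M$ is a submartingale, and a martingale under $v_2^*$; the boundedness of $r_2$, the finiteness of the horizon $T_\kappa$ and the weighted estimates supply the integrability needed to pass from the local to the genuine (sub)martingale property. Evaluating at $t=T_\kappa$, where $\theta e^{-\alpha T_\kappa}=\kappa$ so that the initial datum gives $\psi_\kappa(\kappa,X(T_\kappa))=e^{\kappa\|r_2\|_\infty/\alpha}$, and taking expectations yields $\psi_\kappa(\theta,x)\le E_x^{\hat v_1,v_2}[\,\cdot\,]$ for all $v_2$, with equality for $v_2^*$; taking the infimum over $v_2\in\mathcal M_2$ produces the claimed formula. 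The same computation applied to any $W^{1,2,p,\lambda}$ solution forces it to coincide with this value function, giving uniqueness. The second assertion follows verbatim with the roles of the two players interchanged.

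The step I expect to be most delicate is the justification of the generalized It\^o formula and the resulting submartingale conclusion: the function $\psi_\kappa$ is only of Sobolev class, $\hat v_1$ is merely measurable in $(\theta,x)$, and the representation carries an exponential weight, so one must localize with care, control the stochastic integral via the weighted estimates, and invoke a measurable selection theorem to produce $v_2^*$. Establishing the a priori weighted-Sobolev bounds that make the existence theory applicable on all of $\mathbb{R}^d$ is the other technical crux.
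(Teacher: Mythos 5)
Your proposal is correct and follows essentially the same route as the paper: existence and uniqueness from standard parabolic $L^p$/weighted-Sobolev theory (the paper cites Bensoussan--Lions, Theorem 3.3), followed by the stochastic representation obtained by applying the It\^o--Dynkin formula to the exponentially weighted process $M(t)$, using the HJB equation to conclude that $M$ is a submartingale for arbitrary $v_2$ and a martingale under a measurable minimizing selector (the paper invokes Bene\v{s} for the selector), which yields the inequality, the equality, and uniqueness. The only cosmetic differences are your explicit time change $\theta = \kappa e^{\alpha\tau}$ and your appeal to Krylov's generalized It\^o formula where the paper instead approximates $\psi_\kappa$ by smooth compactly supported functions in $W^{1,2,p,\lambda}$ and passes to the limit, together with a stopping-time localization $\tau_R$ and dominated convergence---technically the same device.
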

\begin{proof} 
From [\cite{BensoussanLions}, Theorem 3.3, p.235-236], it follows that (\ref{auxillarydiscountedhjb1})
has a unique solution in 
${\mathcal W}^{1,2,p, \lambda}((\kappa, \Theta) \times \mathbb{R}^d) ,  p \geq 2, \lambda > 0$. 
Since  
${\mathcal W}^{1,2,p}((\kappa, \Theta) \times \mathbb{R}^d) $ is isometric to  
$W^{1,2, p, \lambda}((\kappa , \Theta) \times \mathbb{R}^d)$, it follows that the p.d.e. 
(\ref{auxillarydiscountedhjb1}) has a unique solution in 
$\bigcap_{p \geq 2, \lambda > 0} W^{1,2,p, \lambda}((\kappa , \Theta) \times \mathbb{R}^d)$.

Fix $p \geq d+1$ and $\lambda > 0$. Choose a sequence 
$\psi^n \in C^{\infty}_0 ( (\kappa, \Theta) \times \mathbb{R}^d)$, the space of all 
$C^{\infty} ((\kappa, \Theta) \times \mathbb{R}^d)$ functions which are compactly supported, such that
$\psi^n \to \psi_{\kappa}$ in $W^{1,2,p, \lambda}((\kappa, \Theta) \times \mathbb{R}^d)$.

Now using It$\hat{\rm o}$-Dynkin  formula to $\psi^n$ 
$(\theta(t), X(t)), t \geq 0,$ where  $ \theta(t) =  \theta e^{-\alpha t}$ and $X(t)$ is the process
(\ref{statedynamics}) corresponding to the Markov strategy pair $(\hat v_1, v_2), v_2 \in {\mathcal M}_2$ 
with the initial condition $X(0) = x$,  , we obtain
\interdisplaylinepenalty=0
\begin{eqnarray*}\label{eq0appendix}
\psi^n(\theta, x) & = &   E^{\hat v_1, v_2}_x \Big[ e^{\frac{\kappa \|r_2\|_{\infty}}{\alpha}} 
 e^{\theta\int^{T_{\kappa} \wedge \tau_R}_0 e^{- \alpha s}
 r_2(X(s), \hat v_1(\theta(s),X(s)), v_2(s, X(s))) ds }  \Big]\nonumber \\
&& - E^{\hat v_1, v_2}_x \Big[ \int^{T_{\kappa} \wedge \tau_R}_0 
e^{\theta \int^t_0 e^{-\alpha s} r_2(X(s), \hat v_1(\theta(s),X(s)), v_2(s, X(s))) ds } \Big[ 
- \alpha \theta(t) \frac{\partial \psi^n(\theta(t), X(t))}{\partial \theta}    \nonumber \\
&&  + \langle b(X(t), \hat v_1(\theta(t),X(t)), v_2(t, X(t))) , \nabla_x \psi^n(\theta(t), X(t)) \rangle   \nonumber \\
&&  +  \theta(t) r_2(X(t),\hat v_1(\theta(t),X(t)), v_2(t, X(t))) \psi^n(\theta(t), X(t))   \nonumber \\
&&  + \frac{1}{2} {\rm trace}(a(X(t))  \nabla^2_x \psi^n(\theta(t), X(t))) \Big] dt \Big],\nonumber  \\ 
\end{eqnarray*}
where $$\tau_R \ = \ \inf \{ t \geq 0 | \|X(t) \| > R \} , R > 0.$$
Now by letting $n \to \infty$, we obtain
\interdisplaylinepenalty=0
\begin{eqnarray*}
\psi_{\kappa}(\theta, x) & = &   E^{\hat v_1, v_2}_x \Big[ e^{\frac{\kappa \|r_2\|_{\infty}}{\alpha}} 
 e^{\theta\int^{T_{\kappa} \wedge \tau_R}_0 e^{- \alpha s}
 r_2(X(s),\hat v_1(\theta(s),X(s)), v_2(s, X(s))) ds }  \Big]\nonumber \\
&& - E^{v_1, v_2}_x \Big[ \int^{T_{\kappa} \wedge \tau_R}_0 
e^{\theta \int^t_0 e^{-\alpha s} r_2(X(s), \hat v_1(\theta(s), X(s)), v_2(s, X(s))) ds } \Big[ 
- \alpha \theta(t) \frac{\partial \psi_{\kappa}(\theta(t), X(t))}{\partial \theta}   \\ \nonumber 
&&  + \langle b(X(t),\hat v_1(\theta(t),X(t)), v_2(t, X(t))) , \nabla_x \psi_{\kappa}(\theta(t), X(t)) 
\rangle  \\ \nonumber
&&  +  \theta(t) r_2(X(t),\hat v_1(\theta(t),X(t)), v_2(t, X(t))) \psi^n(\theta(t), X(t))  \\ \nonumber 
&&  + \frac{1}{2} {\rm trace}(a(X(t))  \nabla^2_x \psi_{\kappa}(\theta(t), X(t))) \Big] dt \Big]. \\ \nonumber 
\end{eqnarray*}
 Hence it follows that
\[
\psi_{\kappa}(\theta, x) \ \leq \ E^{\hat v_1, v_2}_x \Big[ e^{\frac{\kappa \|r_2\|_{\infty}}{\alpha}} 
e^{\theta \int^{T_{\kappa} \wedge \tau_R}_0 e^{-\alpha s} r_2(X(t), \hat v_1(\theta(t), X(t)), v_2(t, X(t))) dt}
 \Big], \ \forall \  v_2 \in {\mathcal M}_2.
\]
Now by invoking dominated convergence theorem for letting $R \to \infty$ above, we obtain 
\[
\psi_{\kappa}(\theta, x) \ \leq \ E^{\hat v_1, v_2}_x \Big[ e^{\frac{\kappa \|r_2\|_{\infty}}{\alpha}} 
e^{\theta \int^{T_{\kappa}}_0 e^{-\alpha s} r_2(X(t), \hat v_1(\theta(t), X(t)), v_2(t, X(t))) dt}
 \Big], \ \forall \  v_2 \in {\mathcal M}_2.
\]
Let $\hat v^*_2(\theta, x)$ denote a (measurable) minimizing selector in
\[
\inf_{v_2 \in V_2} \Big[ \langle b(x, \hat v_1(\theta, x), v_2), \nabla_x \psi_{\kappa}(\theta, x) \rangle + 
\theta r_2(x, \hat v_1(\theta, x), v_2) \psi_{\kappa}(\theta, x)  \Big] .
\]
That is 
\interdisplaylinepenalty=0
\begin{eqnarray*}
&& \inf_{v_2 \in V_2} \Big[ \langle b(x, \hat v_1(\theta, x), v_2), \nabla_x \psi_{\kappa}(\theta, x) \rangle + 
\theta r_2(x, \hat v_1(\theta, x), v_2) \psi_{\kappa}(\theta, x)  \Big]
 \\ \nonumber
&& =  \langle b(x, \hat v_1(\theta, x), \hat v^*_2(\theta, x)), \nabla_x \psi_{\kappa}(\theta, x) \rangle + 
\theta r_2(x, \hat v_1(\theta, x), \hat v^*_2(\theta, x)) \psi_{\kappa}(\theta, x). 
\end{eqnarray*}
The existence of such a $\hat v^*_2(\theta, x)$ is ensured by \cite{Benes}.
Repeating the above argument replacing $v_2$ with $\hat v^*_2 \in \hat {\mathcal S}_2$, we obtain
\[
\psi_{\kappa}(\theta, x) \ =  \ E^{\hat v_1, \hat v^*_2}_x \Big[ e^{\frac{\kappa \|r_2\|_{\infty}}{\alpha}} 
e^{\theta \int^{T_{\kappa}}_0 e^{-\alpha s} r_2(X(t), \hat v_1(\theta(t),X(t)), \hat v^*_2(\theta(t), X(t))) dt} \Big] .
\]
Thus 
\begin{equation}\label{eq1appendix}
\psi_{\kappa}(\theta, x) \ = \ \inf_{v_2 \in {\mathcal M}_2} 
E^{\hat v_1, v_2}_x \Big[ e^{\frac{\kappa \|r_2\|_{\infty}}{\alpha}} 
e^{\theta \int^{T_{\kappa}}_0 e^{-\alpha s} r_2(X(t), \hat v_1(\theta(t),X(t)), v_2(t, X(t))) dt} \Big].
\end{equation}
This completes the proof of the first part. The proof of the second part follows by a symmetric argument.
\end{proof}
\vspace{0.1in}

Next we take limit $\kappa \to 0$ and show that value function satisfies the limiting equation. In particular we prove the following theorem.
\vspace{0.1in}
\begin{theorem}\label{pdeexistence} Assume (A1). (i)
For each $\hat v_1 \in \hat{\mathcal S}_1$, the p.d.e. 
\interdisplaylinepenalty=0
\begin{eqnarray}\label{discountedhjb1}
\alpha \theta \frac{\partial \psi}{\partial \theta} & = & 
\inf_{v_2 \in V_2} \Big[ \langle b(x, \hat v_1(\theta, x), v_2), \nabla_x \psi \rangle 
+ \theta r_2(x, \hat v_1(\theta, x), v_2) \psi  \Big] \nonumber \\ 
&& + \frac{1}{2} \ {\rm trace}(a(x) \nabla^2_x \psi) , \\ \nonumber 
\psi (0, x) & = & 1, \theta \in (0, \ \Theta) , \ x \in \mathbb{R}^d
\end{eqnarray}
has a unique solution in 
$\bigcap_{p \geq d+1} W^{1, 2, p }_{{\rm loc}} ( (0, \ \Theta) \times \mathbb{R}^d) 
\cap C_b((0, \ \Theta) \times \mathbb{R}^d)$
given by 
$$\inf_{v_2 \in {\mathcal M}_2}
E^{v_1, v_2}_x \Big[e^{\theta \int^{T_\kappa}_0e^{-\alpha t} 
r_2(X(t), \hat v_1(\theta e^{-\alpha t}, X(t)), v_2(t, X(t)))dt}\Big] \ := \ \psi^{v_1}_{\alpha, 2} (\theta, x).$$

(ii) Similarly, for each $\hat v_2 \in \hat{\mathcal S}_2$, the p.d.e. 
\interdisplaylinepenalty=0
\begin{eqnarray}\label{discountedhjb2}
\alpha \theta \frac{\partial \phi}{\partial \theta} & = & 
\inf_{v_1 \in V_1} \Big[ \langle b(x, v_1, \hat v_2(\theta, x)), \nabla_x \phi \rangle 
+ \theta r_1(x, v_1, \hat v_2(\theta, x)) \phi  \Big] \nonumber \\ 
&& + \frac{1}{2} \ {\rm trace}(a(x) \nabla^2_x \phi) , \\ \nonumber 
\phi (0, x) & = & 1, \theta \in (0, \ \Theta) , \ x \in \mathbb{R}^d
\end{eqnarray}
has a unique solution in 
$\bigcap_{p \geq d+1} W^{1, 2, p}_{{\rm loc}} ( (0, \ \Theta) \times \mathbb{R}^d) 
\cap C_b((0, \ \Theta) \times \mathbb{R}^d)$
 given by $$ \inf_{v_1 \in {\mathcal M}_1}
E^{v_1, v_2}_x \Big[e^{\theta \int^{T_\kappa}_0e^{-\alpha t} 
r_1(X(t), \hat v_1(\theta e^{-\alpha t}, X(t)), v_2(t, X(t)))dt}\Big] \ := \ \psi^{v_2}_{\alpha, 1} (\theta, x). $$
\end{theorem}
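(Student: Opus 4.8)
The plan is to construct the solution of (\ref{discountedhjb1}) as the $\kappa\downarrow 0$ limit of the approximations $\psi_\kappa$ furnished by Lemma~\ref{appendixlemma-rep1}, and to identify this limit with the candidate value function
\[
\psi^{v_1}_{\alpha,2}(\theta,x)=\inf_{v_2\in{\mathcal M}_2}E^{\hat v_1,v_2}_x\Big[e^{\theta\int_0^\infty e^{-\alpha t}r_2(X(t),\hat v_1(\theta e^{-\alpha t},X(t)),v_2(t,X(t)))\,dt}\Big].
\]
The crucial observation is that the terminal datum $e^{\kappa\|r_2\|_\infty/\alpha}$ in (\ref{auxillarydiscountedhjb1}) is tailored to dominate the tail that is discarded at $T_\kappa$: since $r_2\ge 0$ is bounded and $e^{-\alpha T_\kappa}=\kappa/\theta$, one has $\theta\int_{T_\kappa}^\infty e^{-\alpha t}r_2\,dt\le\kappa\|r_2\|_\infty/\alpha$ along every trajectory, whence $e^{\kappa\|r_2\|_\infty/\alpha}\ge e^{\theta\int_{T_\kappa}^\infty e^{-\alpha t}r_2\,dt}$ pathwise. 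Inserting this into the representation of $\psi_\kappa$ (lower bound), and using $\int_0^{T_\kappa}\le\int_0^\infty$ by nonnegativity of $r_2$ (upper bound), I would obtain, for each fixed $v_2$ and hence for the infima, the two-sided estimate
\[
\psi^{v_1}_{\alpha,2}(\theta,x)\ \le\ \psi_\kappa(\theta,x)\ \le\ e^{\kappa\|r_2\|_\infty/\alpha}\,\psi^{v_1}_{\alpha,2}(\theta,x).
\]

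Since $1\le\psi^{v_1}_{\alpha,2}\le e^{\Theta\|r_2\|_\infty/\alpha}$ is bounded (using $\theta<\Theta$), letting $\kappa\downarrow 0$ in this sandwich shows $\psi_\kappa\to\psi^{v_1}_{\alpha,2}$ uniformly on $(0,\Theta)\times\mathbb{R}^d$. The same a priori bound, together with $0\le\theta\int_0^\infty e^{-\alpha t}r_2\,dt\le\theta\|r_2\|_\infty/\alpha$, yields $\psi^{v_1}_{\alpha,2}(\theta,x)\to 1$ as $\theta\downarrow 0$, uniformly in $x$, which furnishes the initial condition $\psi(0,x)=1$.

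Next I would upgrade this convergence to one strong enough to pass to the limit in the equation. Fixing a compact $K\subset(0,\Theta)\times\mathbb{R}^d$ and choosing $\kappa_0$ so small that $K$ lies in the interior of $(\kappa,\Theta)\times\mathbb{R}^d$ for all $\kappa<\kappa_0$, each $\psi_\kappa$ solves (\ref{auxillarydiscountedhjb1}) on a neighbourhood of $K$. As $b,r_2$ are bounded and $a$ is bounded and uniformly elliptic by (A1), and the $\psi_\kappa$ are uniformly bounded, the interior $L^p$ parabolic estimates underlying \cite{BensoussanLions} give a bound on $\|\psi_\kappa\|_{1,2,p;K}$ uniform in $\kappa$, for every $p\ge d+1$. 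Weak compactness in $W^{1,2,p}_{\rm loc}$ then produces a subsequential weak limit which, by the compact parabolic Sobolev embedding (taking $p$ large so that $W^{1,2,p}_{\rm loc}\hookrightarrow C^1$ in $x$ and $C^0$ in $\theta$ locally), converges together with its spatial gradient locally uniformly; by the sandwich this limit is $\psi^{v_1}_{\alpha,2}$. The convergence of $\psi_\kappa$ and $\nabla_x\psi_\kappa$, the continuity and boundedness of $b(x,\hat v_1(\theta,x),\cdot)$ and $r_2(x,\hat v_1(\theta,x),\cdot)$, and the compactness of $V_2$ allow me to pass to the limit in the Hamiltonian $\inf_{v_2\in V_2}[\,\cdots]$ uniformly on $K$. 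Hence $\psi^{v_1}_{\alpha,2}$ satisfies (\ref{discountedhjb1}) a.e.\ and belongs to $\bigcap_{p\ge d+1}W^{1,2,p}_{\rm loc}((0,\Theta)\times\mathbb{R}^d)\cap C_b((0,\Theta)\times\mathbb{R}^d)$.

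For uniqueness I would argue by verification. Given any solution $\tilde\psi$ in the stated class, I would mollify it and apply the It\^o--Dynkin formula along $(\theta e^{-\alpha t},X(t))$ up to $T\wedge\tau_R$, exactly as in the proof of Lemma~\ref{appendixlemma-rep1}: testing against an arbitrary $v_2\in{\mathcal M}_2$ gives one inequality, while a measurable minimizing selector (existing by \cite{Benes}) gives the reverse. Letting $R\to\infty$ (dominated convergence) and then $T\to\infty$, the terminal contribution $E^{\hat v_1,v_2}_x[\tilde\psi(\theta e^{-\alpha T},X(T))\,e^{\theta\int_0^T e^{-\alpha t}r_2\,dt}]$ converges to $E^{\hat v_1,v_2}_x[e^{\theta\int_0^\infty e^{-\alpha t}r_2\,dt}]$ because $\theta e^{-\alpha T}\downarrow 0$ and $\tilde\psi(0,\cdot)=1$; this identifies $\tilde\psi$ with $\psi^{v_1}_{\alpha,2}$. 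Part (ii) is symmetric, with $r_1,V_1$ replacing $r_2,V_2$. The main obstacle I anticipate is the third paragraph: securing $\kappa$-uniform interior parabolic estimates in a norm strong enough to control the nonlinear first-order term, while the equation degenerates at $\theta=0$ (the coefficient $\alpha\theta$ of $\partial_\theta\psi$ vanishing there). This degeneracy is precisely why the datum cannot be imposed at $\theta=0$ directly and must be recovered as the limit $\kappa\downarrow 0$; keeping $K$ away from $\{\theta=0\}$ and using the sandwich to pin down the behaviour as $\theta\downarrow 0$ is what circumvents it.
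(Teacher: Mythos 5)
Your proposal is correct in outline and shares the paper's skeleton---both construct the solution as the $\kappa\downarrow 0$ limit of the $\psi_\kappa$ from Lemma~\ref{appendixlemma-rep1} and prove the representation/uniqueness by an It\^o--Dynkin verification---but you execute the two key steps differently, and one of your choices is a genuine improvement while the other leaves a gap the paper's method avoids. Your sandwich
\[
\psi^{\hat v_1}_{\alpha,2}\ \le\ \psi_\kappa\ \le\ e^{\kappa\|r_2\|_\infty/\alpha}\,\psi^{\hat v_1}_{\alpha,2},
\]
coming from the pathwise tail bound $\theta\int_{T_\kappa}^\infty e^{-\alpha t}r_2\,dt\le\kappa\|r_2\|_\infty/\alpha$ (valid because $\theta e^{-\alpha T_\kappa}=\kappa$ and the same inequality passes to both infima over $v_2\in{\mathcal M}_2$), is correct and does not appear in the paper. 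It identifies the limit of the \emph{whole} family $\{\psi_\kappa\}$ with the value function before any PDE analysis, gives uniform rather than subsequential weak convergence, and delivers the initial condition $\psi(0,x)=1$ for free. The paper instead extends $\psi_\kappa$ by a constant below $\theta=\kappa$, extracts a weak $W^{1,2,p}_{\rm loc}$ limit via Banach--Alaoglu and diagonalization, and only identifies the limit with the value function at the end by re-running the It\^o argument; your route makes that identification transparent.

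Where you are weaker is the compactness estimate, and it is precisely at the point you yourself flag as the main obstacle. The paper does not use interior parabolic estimates: it first obtains a sup-norm bound on $\partial\psi_\kappa/\partial\theta$, uniform in $\kappa$ and valid for \emph{all} $\theta\in(0,\Theta)$, from the stochastic representation (estimate (\ref{eq3section3}), quoting [\cite{AnupBorkarSuresh}, Theorem 3.1]); it then freezes $\theta$, rewrites (\ref{auxillarydiscountedhjb1}) as a uniformly elliptic equation in $x$ with right-hand side bounded uniformly in $\kappa$ and $\theta$, and invokes elliptic $W^{2,p}$ estimates ([\cite{BensoussanLions}, Lemma 1.5], [\cite{Borkar}, p.158]). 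Combining the two gives $W^{1,2,p}$ bounds on all of $(0,\Theta)\times B_R$, i.e.\ up to $\theta=0$. This matters because, by the paper's own definition, $W^{1,2,p}_{\rm loc}((0,\Theta)\times\mathbb{R}^d)$ is local only in $x$: membership requires $\psi\in W^{1,2,p}((0,\Theta)\times B_R)$ for every $R$. Your interior estimates on compacts bounded away from $\{\theta=0\}$ (forced on you by the degeneracy of the coefficient $\alpha\theta$) yield Sobolev control only on $(\epsilon,\Theta)\times B_R$, and the sandwich supplies only sup-norm information near $\theta=0$, not control of $\partial_\theta\psi$ or $\nabla^2_x\psi$ there. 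To land in the stated class, import the paper's probabilistic derivative bound and the frozen-$\theta$ elliptic estimate in place of your interior parabolic estimates; with that substitution your argument is complete and, thanks to the sandwich, arguably cleaner than the paper's.
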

\begin{proof} We  prove (i). The proof of (ii) is analogous. 
From Lemma \ref{appendixlemma-rep1}, it follows that (\ref{auxillarydiscountedhjb1})
has a unique solution in 
$W^{1,2,p, \lambda}((\kappa, \Theta) \times \mathbb{R}^d) 
\cap C_b((\kappa, \, \Theta) \times \mathbb{R}^d), \ p \geq2, \lambda > 0$ and 
is given by
\begin{equation}\label{eq1section3}
\psi_{\kappa}(\theta, x) \ = \ \inf_{v_2 \in {\mathcal M}_2}
E^{v_1, v_2}_x \Big[e^{\frac{\kappa \|r_2\|_{\infty}}{\alpha}} e^{\theta \int^{T_\kappa}_0e^{-\alpha t} 
r_2(X(t), \hat v_1(\theta e^{-\alpha t}, X(t)), v_2(t, X(t)))dt}\Big].
\end{equation}
From (\ref{eq1section3}), it follows that
\begin{equation}\label{eq2section3}
\|\psi_{\kappa} \|_{\infty} \ \leq \ e^{\frac{\theta \|r_2 \|_{\infty} }{\alpha}}.
\end{equation}
Using similar arguments as in the proof of [\cite{AnupBorkarSuresh}, Theorem 3.1], it follows that
\begin{equation}\label{eq3section3}
\|\frac{\partial \psi_{\kappa}}{\partial \theta} \|_{\infty} \ \leq \ 3 
e^{\frac{(\theta + 2) \|r_2\|_{\infty}}{\alpha}} \frac{\|r_2\|_{\infty}}{\alpha}.
\end{equation} 
For $\lambda > 0$ fixed, rewrite (\ref{auxillarydiscountedhjb1}) as follows.
\begin{equation}\label{auxillaryellipticpde1}
\lambda \psi_{\kappa} = \ \inf_{v_2 \in V_2} \Big[ \langle b(x, \hat v_1(\theta, x), v_2) , 
\nabla_x \psi_{\kappa} \rangle
+ r_{\kappa}(\theta, x, v_2) \Big] + \frac{1}{2} {\rm trace}(a(x) \nabla^2_x \psi_{\kappa}),
\end{equation}
where
\[
r_{\kappa}(\theta, x, v_2) \ = \ \theta r_2(x, \hat v_1(\theta, x), v_2) \psi_{\kappa}(\theta, x) - \alpha \theta 
\frac{\partial \psi_{\kappa}}{\partial \theta} + \lambda \psi_{\kappa}(\theta, x).
\]
Choose $\lambda \geq \lambda_0 > 0$ is such that $r_k \geq 0$. 
From [\cite{BensoussanLions}, Lemma 1.5, p.209], for $\lambda  > 0$ large enough, 
say $\lambda \geq \lambda_1 $ for some large $\lambda_1 \geq \lambda_0$, for each $ \theta > \kappa$,
$\psi_{\kappa}(\theta, \cdot) \in \bigcap_{p \geq 2, \lambda > \lambda_1} W^{2, p, \lambda}(\mathbb{R}^d) 
\cap C_b(\mathbb{R}^d)$ is the unique solution to (\ref{auxillaryellipticpde1}). 
Let $\hat v_2(\theta, x)$ be a minimizing selector in (\ref{auxillaryellipticpde1}). Then 
(\ref{auxillaryellipticpde1}) can be rewritten as 
\[
\langle b(x, \hat v_1(\theta, x), \hat v_2(\theta, x)) , \nabla_x \psi_{\kappa} \rangle 
+ \frac{1}{2} {\rm trace}(a(x) \nabla^2_x
 \psi_{\kappa})  = \lambda \psi_{\kappa}  - r_k(x, \hat v_2(\theta, x)).
\]
The r.h.s. of the above p.d.e. is uniformly bounded in $\kappa > 0$. Hence by using similar arguments as in
 [\cite{Borkar}, p.158], it follows that 
\begin{equation}\label{eq4section3}
\|\psi_{\kappa}(\theta, \cdot) \|_{2, p; B_R}  \leq K,
\end{equation}
where the constant $K >0$ is independent of $\kappa > 0, \theta \in (0, \ \Theta)$. 

Define $\Bar{\psi}_{\kappa}$ as follows.
\[
\Bar{\psi}_{\kappa}(\theta, x) \ = \ 
\left\{
\begin{array}{lll}
\psi_{\kappa}(\theta, x) & {\rm if}& \theta > \kappa\\
e^{\frac{\kappa \|r_2\|_{\infty}}{\alpha}} &{\rm if}& \theta \leq \kappa.
\end{array}
\right.
\]
One can see that $\Bar{\psi}_{\kappa} \in  W^{1,2,p}((0, \Theta) \times B_R)$
for all $p \geq 2, R > 0$.  Also from (\ref{eq3section3}) and (\ref{eq4section3}), it follows that
\begin{equation}\label{eq5section3}
\sup_{\kappa > 0} \|\Bar{\psi}_{\kappa}\|_{1,2,p; B_R} < \infty,  \ R > 0.
\end{equation}
Note that $W^{1,2,p}( (0, \Theta) \times B_R)$ is a reflexive Banach space. Hence
using Banach-Alaoglu theorem, it follows that $\{\psi_{\kappa} | \kappa > 0\}$ is weakly
compact in $W^{1, 2, p}( (0, \ \Theta) \times B_R), \, R > 0$.  Hence by a diagonalization 
procedure, there exists 
$\psi \in W^{1,2, p}_{{\rm loc}} ( (0, \Theta) \times \mathbb{R}^d)$ and a sequence, say $\psi_{\kappa_n}$,
such that $\psi_{\kappa_n}  \to \psi$ weakly in $W^{1,2, p, \lambda}((0, \Theta) \times B_R), R > 0$.
Now by a standard approximation argument, by letting $\kappa_n \to 0$ in 
(\ref{auxillarydiscountedhjb1}), it follows that 
$\psi \in W^{1,2,p}_{{\rm loc}}((0, \Theta) \times \mathbb{R}^d)$
is a solution in the sense of distributions  to 
\interdisplaylinepenalty=0
\begin{eqnarray*}
\alpha \theta \frac{\partial \psi }{\partial \theta} & = & 
\inf_{v_2 \in V_2} \Big[ \langle b(x, \hat v_1(\theta, x), v_2), \nabla_x \psi \rangle + 
\theta r_2(x, \hat v_1(\theta, x), v_2) \psi  \Big] \\ 
&& + \frac{1}{2} \ {\rm trace}(a(x) \nabla^2_x \psi) .
\end{eqnarray*}
Moreover $\psi(0, x) = 1$. Hence $\psi \in W^{1,2,p}_{{\rm loc}}((0, \Theta) \times \mathbb{R}^d)$
is a solution to (\ref{discountedhjb1}).

For $p \geq d+1$, using It$\hat{\rm o}$'s formula 
as in the proof of Lemma \ref{appendixlemma-rep1}, it follows that 
\[
\psi (\theta, x) \ = \ \inf_{v_2 \in {\mathcal M}_2}
E^{v_1, v_2}_x \Big[e^{\theta \int^{\infty}_0e^{-\alpha t} r_2(X(t), \hat v_1(\theta e^{-\alpha t}, X(t)), 
v_2(t, X(t)))dt}\Big](:=\psi^{\hat v_1}_{\alpha,2} (\theta, x)).
\]
Hence $\psi \in \bigcap_{p \geq d+1} W^{1,2,p}_{{\rm loc}}((0, \Theta) \times \mathbb{R}^d)\cap C_b((0, \ \Theta) \times \mathbb{R}^d)$
is the unique solution to (\ref{discountedhjb1}). 
\end{proof}

\vspace{0.1in}
To continue our analysis we prove some estimates needed later.
\vspace{0.1in}
\begin{lemma} \label{lemma-estimates1} Assume (A1). 
(i) For $\theta \in (0, \Theta)$ and $\alpha > 0$ and $\hat v_i \in \hat{\mathcal S}_i, i =1,2$, we have
\interdisplaylinepenalty=0
\begin{eqnarray}\label{eq1estimates}
1 \leq \max \{ \psi^{\hat v_2}_{\alpha, 1} (\theta, x) , \psi^{\hat v_1}_{\alpha, 2} (\theta, x) \} 
\leq \max_{i}e^{\frac{\theta \|r_i\|_{\infty}}{\alpha}}, & \\ \nonumber 
\max \{ \|\frac{\partial \psi^{\hat v_2}_{\alpha, 1}}{\partial \theta} \|_{\infty}, 
\|\frac{\partial \psi^{\hat v_1}_{\alpha, 2}}{\partial \theta} \|_{\infty} \} 
 \ \leq \ \max_{i}\frac{\|r_i\|_{\infty}}{\alpha} e^{\frac{\Theta \| r_i\|_{\infty}}{\alpha}} . & 
\end{eqnarray}

(ii) For each $R > 0$, and $\hat v_i \in \hat{\mathcal S}_i, i =1,2$, we have
\begin{equation}\label{eq2estimates1}
\sup_{ \hat v_1 \in \hat {\mathcal S}_1 } \|\psi^{\hat v_1}_{\alpha, 2} \|_{1,2, p;  (0, \Theta) \times B_R} < \infty, \ 
\sup_{ \hat v_2 \in \hat {\mathcal S}_2 } \|\psi^{\hat v_2}_{\alpha, 1} \|_{1,2, p;  (0, \Theta) \times B_R} < \infty . 
\end{equation}

\end{lemma}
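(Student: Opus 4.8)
The plan is to prove (i) and (ii) in that order, drawing on the representation formulas and the $\theta$-derivative bound already established, then upgrading the interior bound of Lemma \ref{appendixlemma-rep1} to a bound uniform over the opponent's eventually stationary strategy.

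For part (i), the lower bound $1 \le \psi^{\hat v_2}_{\alpha,1}$ is immediate from the stochastic representation in Theorem \ref{pdeexistence}: the integrand $r_i \ge 0$, so the exponent is nonnegative and the exponential is $\ge 1$; taking the infimum over $v_1 \in \mathcal M_1$ preserves this, and symmetrically for $\psi^{\hat v_1}_{\alpha,2}$. For the upper bound I would bound the exponent pathwise: since $0 \le r_i \le \|r_i\|_\infty$ and $\int_0^\infty e^{-\alpha t}\,dt = 1/\alpha$, we get $\theta\int_0^\infty e^{-\alpha t} r_i\,dt \le \theta\|r_i\|_\infty/\alpha$, whence the exponential is $\le e^{\theta\|r_i\|_\infty/\alpha}$ pointwise; the infimum over Markov strategies only decreases this, giving $\psi^{\hat v_i}_{\alpha,j} \le e^{\theta\|r_j\|_\infty/\alpha} \le \max_i e^{\theta\|r_i\|_\infty/\alpha}$. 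The derivative bound is essentially inequality \eqref{eq3section3}: the estimate $\|\partial_\theta \psi_\kappa\|_\infty \le 3 e^{(\theta+2)\|r_2\|_\infty/\alpha}\|r_2\|_\infty/\alpha$ passes to the limit $\kappa \to 0$, and I would observe that a sharper (or at least the claimed) $\theta$-Lipschitz bound follows by differentiating the representation formula in $\theta$ and bounding $\int_0^\infty e^{-\alpha t} r_i\,dt \le \|r_i\|_\infty/\alpha$ together with $\psi^{\hat v_i}_{\alpha,j} \le e^{\Theta\|r_j\|_\infty/\alpha}$ on $(0,\Theta)$, then taking the supremum over $\theta \in (0,\Theta)$ and the maximum over $i$.

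For part (ii), the key is that the $W^{1,2,p}$-bound \eqref{eq5section3} obtained in the proof of Theorem \ref{pdeexistence} was derived from (a) the uniform sup-norm bound \eqref{eq2section3}, (b) the uniform $\theta$-derivative bound \eqref{eq3section3}, and (c) the interior elliptic estimate \eqref{eq4section3}, whose controlling constant $K$ depended only on $\|r_2\|_\infty$, the ellipticity and Lipschitz constants from (A1), and the right-hand side of the rewritten equation \eqref{auxillaryellipticpde1}. I would re-examine that chain and verify that \emph{none} of these constants depends on the particular $\hat v_1 \in \hat{\mathcal S}_1$: the drift $b$ and cost $r_2$ are bounded uniformly over their control arguments by (A1), so substituting $\hat v_1(\theta,x)$ for the control in the second slot produces coefficients bounded by the same $\|b\|_\infty, \|r_2\|_\infty$ regardless of which $\hat v_1$ is used. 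Passing the already-$\hat v_1$-uniform bound \eqref{eq5section3} through the weak limit $\kappa \to 0$ (using weak lower semicontinuity of the norm) then yields $\|\psi^{\hat v_1}_{\alpha,2}\|_{1,2,p;(0,\Theta)\times B_R} \le C(R)$ with $C(R)$ independent of $\hat v_1$, and taking the supremum over $\hat v_1 \in \hat{\mathcal S}_1$ gives the first inequality in \eqref{eq2estimates1}; the second is symmetric.

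The main obstacle is the bookkeeping in part (ii): I must confirm that the bound $K$ in \eqref{eq4section3}, inherited from the Calderón--Zygmund/interior $W^{2,p}$ estimate used in the style of [\cite{Borkar}, p.158], genuinely depends on the coefficients only through quantities controlled uniformly over $\hat v_1$ (namely $\sup_{v_1,v_2,x}|b|$, the ellipticity modulus, and $\sup|r_2|$, together with the already-uniform bounds on $\psi_\kappa$ and $\partial_\theta\psi_\kappa$), and not on any modulus of continuity of the coefficients in $\theta$ or $x$ that could degrade as $\hat v_1$ ranges over $\hat{\mathcal S}_1$. Since the interior $L^p$ estimate requires only uniform ellipticity and boundedness (not continuity) of the leading and lower-order coefficients, this dependence is benign, and the uniformity over $\hat{\mathcal S}_1$ follows.
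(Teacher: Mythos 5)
Your proposal is correct, and for part (i) it is essentially the paper's argument (the paper simply cites [\cite{AnupBorkarSuresh}, Theorem 3.1]): the two-sided bound comes pathwise from the stochastic representation, and the derivative bound comes from differentiating the representation at a minimizer. Note that your first suggestion --- passing (\ref{eq3section3}) to the limit $\kappa \to 0$ --- would only give the weaker constant $3 e^{(\theta+2)\|r_2\|_\infty/\alpha}\|r_2\|_\infty/\alpha$, so your fallback to direct differentiation of the representation (which is the envelope-theorem device of \cite{MS} that the paper itself invokes later, in Theorem \ref{thm4.1}) is indeed what is needed to obtain exactly (\ref{eq1estimates}). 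Where you genuinely diverge from the paper is part (ii). You reopen the proof of Theorem \ref{pdeexistence}, verify that every constant in the chain (\ref{eq2section3}), (\ref{eq3section3}), (\ref{eq4section3}), (\ref{eq5section3}) depends only on $\|b\|_\infty$, $\|r_2\|_\infty$, $\alpha$, the ellipticity bound and $R$ --- hence is uniform over $\hat v_1 \in \hat{\mathcal S}_1$ --- and then transfer the $\kappa$-uniform bound to $\psi^{\hat v_1}_{\alpha,2}$ by weak lower semicontinuity of the $W^{1,2,p}$-norm along the weak limit $\kappa_n \to 0$. The paper never goes back to the approximation: it works directly with the limiting equation (\ref{discountedhjb1}), inserts a minimizing selector $\tilde v_2$, reads the equation as a linear elliptic equation for $\psi^{\hat v_1}_{\alpha,2}(\theta,\cdot)$ whose right-hand side $\alpha\theta\,\partial_\theta \psi^{\hat v_1}_{\alpha,2} - \theta r_2\, \psi^{\hat v_1}_{\alpha,2}$ is bounded uniformly in $(\theta,\hat v_1)$ by part (i), and applies the interior estimate of [\cite{Borkar}, p.158] to get $\sup_{\theta,\hat v_1}\|\psi^{\hat v_1}_{\alpha,2}(\theta,\cdot)\|_{2,p;B_R} < \infty$ as in (\ref{eq3estimates1}); integrating in $\theta$ over $(0,\Theta)$ and adjoining the sup-norm bound on $\partial_\theta\psi^{\hat v_1}_{\alpha,2}$ from (i) then yields (\ref{eq2estimates1}). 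Both routes rest on the same two pillars --- the bounds of part (i) (or their $\kappa$-level analogues) and interior $W^{2,p}$ estimates whose constants see the coefficients only through $L^\infty$ bounds and ellipticity, which is precisely the point you flag as the ``main obstacle'' and resolve correctly, since $a$ is control-independent and the drift and zeroth-order terms need only be bounded and measurable. The paper's route is shorter because part (i), applied to the limit functions themselves, already supplies all the uniform data, so nothing needs to be tracked through the approximation; your route costs extra verification but shows, as a by-product, that the approximation scheme is itself uniform over the opponent's strategies.
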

\begin{proof}  The proof of (i) follows as in [\cite{AnupBorkarSuresh}, Theorem 3.1]. 

\vspace{.1in}

Let $\tilde{v}_2(\theta, x)$ be a minimizing selector in (\ref{discountedhjb1}).  
Then (\ref{discountedhjb1}) can be rewritten as 
\interdisplaylinepenalty=0
\begin{eqnarray*}
&&\langle b(x, \hat v_1(\theta, x), \tilde{v}_2(\theta, x)) , \nabla_x \psi^{\hat v_1}_{\alpha, 2} \rangle 
+ \frac{1}{2} \ {\rm trace}
(a(x) \nabla^2_x \psi^{\hat v_1}_{\alpha, 2} ) \\ &= & \alpha \theta 
\frac{\partial \psi^{\hat v_1}_{\alpha, 2}}{\partial \theta} 
- \theta r_2(x, \hat v_1(\theta, x), \tilde{v}_2(\theta , x)) \psi^{\hat v_1}_{\alpha, 2} .
\end{eqnarray*}
From (i), it follows that r.h.s. above is uniformly bounded in $\theta \in (0, \Theta), \hat v_1 \in \hat{\mathcal S}_1$. 
Hence using arguments as in [\cite{Borkar}, p.158], it follows that 
\begin{equation}\label{eq3estimates1}
\sup_{\theta, \hat v_1 } \|\psi^{\hat v_1}_{\alpha, 2} \|_{2, p ; B_R }  < \infty.
\end{equation}
Hence (ii) follows. 
\end{proof}

\vspace{.1in}

\begin{lemma}\label{lemma-continuousmap} Assume (A1). The maps $ \hat v_1 \mapsto \psi^{\hat v_1}_{\alpha, 2} $ 
from $\hat{\mathcal S}_1 \to C^{0, 1}((0, \Theta)  \times \mathbb{R}^d)$  
and $ \hat v_2 \mapsto \psi^{\hat v_2}_{\alpha, 1} $
 from $\hat{\mathcal S}_2  \to C^{0, 1}((0, \Theta)  \times \mathbb{R}^d)$ are continuous.
\end{lemma}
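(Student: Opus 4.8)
The plan is to argue by the standard compactness-and-uniqueness scheme, exploiting that the target topology $C^{0,1}((0,\Theta)\times\mathbb R^d)$ is metrizable so that continuity may be verified sequentially. I treat the first map; the second is symmetric. Fix $\hat v_1\in\hat{\mathcal S}_1$ and a sequence $\hat v_1^n\to\hat v_1$ in $\hat{\mathcal S}_1$, and write $\psi^n:=\psi^{\hat v_1^n}_{\alpha,2}$. It suffices to show that every subsequence of $\{\psi^n\}$ has a further subsequence converging to $\psi^{\hat v_1}_{\alpha,2}$ in $C^{0,1}$ on compact sets; the full sequence then converges. First I would invoke the uniform estimate of Lemma \ref{lemma-estimates1}(ii): for each $R>0$ and $p\ge d+1$, $\sup_n\|\psi^n\|_{1,2,p;(0,\Theta)\times B_R}<\infty$. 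Since $W^{1,2,p}((0,\Theta)\times B_R)$ is reflexive, Banach--Alaoglu together with a diagonalization over $R$ yields a subsequence (not relabelled) and a limit $\tilde\psi\in\bigcap_{p\ge d+1}W^{1,2,p}_{\rm loc}((0,\Theta)\times\mathbb R^d)$ with $\psi^n\rightharpoonup\tilde\psi$ weakly in $W^{1,2,p}_{\rm loc}$. Because $p\ge d+1>d$, the parabolic Sobolev embedding into $C^{0,1}$ is compact on compact subsets, so along this subsequence $\psi^n\to\tilde\psi$ and $\nabla_x\psi^n\to\nabla_x\tilde\psi$ uniformly on compact sets, which is precisely convergence in $C^{0,1}$.

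Next I would show that $\tilde\psi$ solves the HJB equation (\ref{discountedhjb1}) associated with $\hat v_1$. Writing $h^n(\theta,x,u_1,u_2):=\langle\bar b(x,u_1,u_2),\nabla_x\psi^n\rangle+\theta\bar r_2(x,u_1,u_2)\psi^n$ and $h$ for the analogue with $\tilde\psi$, the uniform convergence of $\psi^n,\nabla_x\psi^n$ on compacts together with the boundedness and continuity of $\bar b,\bar r_2$ gives $h^n\to h$ uniformly on compact sets, uniformly over $(u_1,u_2)\in U_1\times U_2$. For any fixed $v_2\in V_2$ the equation gives $\alpha\theta\partial_\theta\psi^n-\tfrac12{\rm trace}(a\nabla^2_x\psi^n)\le\int_{U_2}\int_{U_1}h^n\,\hat v_1^n(du_1)v_2(du_2)$; testing against a nonnegative $f\in C^\infty_0$, using the weak $L^p_{\rm loc}$ convergence of the derivatives on the left and the convergence criterion (\ref{convergencecriterion1}) for $\hat v_1^n$ combined with $h^n\to h$ on the right, and then taking the infimum over a countable dense set of $v_2$, yields the subsolution inequality $\alpha\theta\partial_\theta\tilde\psi-\tfrac12{\rm trace}(a\nabla^2_x\tilde\psi)\le\inf_{v_2\in V_2}[\langle b(x,\hat v_1,v_2),\nabla_x\tilde\psi\rangle+\theta r_2(x,\hat v_1,v_2)\tilde\psi]$ almost everywhere.

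The reverse (supersolution) inequality is the main obstacle, since the infimum over $v_2$ is concave in the argument $\hat v_1^n$ and hence does not commute with the weak$^*$ limit $\hat v_1^n\to\hat v_1$. I would resolve it with the relaxed-control stability machinery of [\cite{AriBorkarGhosh}]: letting $w_2^n(\theta,x)\in V_2$ be a measurable minimizing selector for the $n$-th Hamiltonian, view the pair $(\hat v_1^n,w_2^n)$ as a single relaxed control with values in $\mathcal P(U_1\times U_2)$, extract a weak$^*$ limit whose $U_1$-marginal is $\hat v_1$, and pass to the limit in the resulting linearized equation, the stability of controlled martingale problems under relaxed-control convergence identifying the limiting drift and running cost. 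Equivalently, and perhaps more transparently, using the stochastic representation of Theorem \ref{pdeexistence} one may squeeze $\tilde\psi=\lim_n\inf_{v_2}E^{\hat v_1^n,v_2}_x[\,\cdot\,]$ between $\inf_{v_2}E^{\hat v_1,v_2}_x[\,\cdot\,]$ from above (via a fixed competitor $v_2$) and from below (passing to the limit along optimal pairs $(\hat v_1^n,v_2^{*,n})$ and using joint continuity of the cost functional in the relaxed control pair), thereby obtaining $\tilde\psi=\psi^{\hat v_1}_{\alpha,2}$ directly.

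Once $\tilde\psi$ is shown to solve (\ref{discountedhjb1}) with coefficient $\hat v_1$, the uniqueness assertion of Theorem \ref{pdeexistence} forces $\tilde\psi=\psi^{\hat v_1}_{\alpha,2}$, independently of the subsequence chosen. Hence the whole sequence satisfies $\psi^n\to\psi^{\hat v_1}_{\alpha,2}$ in $C^{0,1}$ on compact sets, which establishes continuity of $\hat v_1\mapsto\psi^{\hat v_1}_{\alpha,2}$. The continuity of $\hat v_2\mapsto\psi^{\hat v_2}_{\alpha,1}$ follows by the identical argument with the roles of the two players interchanged.
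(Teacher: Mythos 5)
Your overall scheme coincides with the paper's: the uniform bound of Lemma \ref{lemma-estimates1}(ii), Banach--Alaoglu plus diagonalization over $R$, the compact embedding of $W^{1,2,p}((0,\Theta)\times B_R)$ into $C^{0,1}$ to upgrade weak convergence to locally uniform convergence of $\psi^n$ and $\nabla_x\psi^n$, identification of the limit as a solution of (\ref{discountedhjb1}) with $\tilde\psi(0,x)=1$, and finally the uniqueness assertion of Theorem \ref{pdeexistence} to conclude $\tilde\psi=\psi^{\hat v_1}_{\alpha,2}$ (the paper leaves your subsequence-of-subsequences remark implicit). Where you diverge is the limit passage in the Hamiltonian: the paper disposes of it in one line (``appropriate integration by parts \dots and then using (\ref{eq2continuousmap})''), whereas you correctly isolate it as the delicate step, observing that the infimum over $v_2$ does not commute with the weak$^*$ convergence $\hat v_1^n\to\hat v_1$. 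Your subsolution direction (fixed competitor $v_2$, test against nonnegative $f$, then infimize over a countable dense set) is sound.

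Your proposed repair of the supersolution direction, however, does not close as stated. If you pass to a weak$^*$ limit of the pairs $(\hat v_1^n,w_2^n)$, viewed as relaxed controls with values in $\mathcal{P}(U_1\times U_2)$, the limit $\mu(\theta,x)$ indeed has $U_1$-marginal $\hat v_1(\theta,x)$, but it need not be a product measure: joint oscillations of the two sequences in $(\theta,x)$ produce correlated Young-measure limits whose disintegration kernel $q(\theta,x,u_1)(du_2)$ depends on $u_1$. For such correlated measures one only gets $\int\!\!\int h(u_1,u_2)\,q(u_1)(du_2)\,\hat v_1(du_1)\ \geq\ \int\inf_{u_2}h(u_1,u_2)\,\hat v_1(du_1)$, and the right-hand side can be \emph{strictly smaller} than $\inf_{v_2\in V_2}\int\!\!\int h\,\hat v_1(du_1)\,v_2(du_2)$; the same defect afflicts your squeeze variant, since $\lim_n E^{\hat v_1^n,v_2^{*,n}}_x[\,\cdot\,]$ is a cost under a correlated relaxed control, which may lie strictly below $\inf_{v_2\in\mathcal{M}_2}E^{\hat v_1,v_2}_x[\,\cdot\,]$. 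What actually rescues this step is the additive structure (A2): the Hamiltonian then splits into a term linear in $\hat v_1^n$ (which passes to the limit by weak$^*$ convergence tested against the locally uniformly convergent $\nabla_x\psi^n$, $\psi^n$) plus an infimum term free of $\hat v_1^n$ (which converges locally uniformly, the infimum being $1$-Lipschitz in the bracket), exactly the separation the paper exploits via Banach--Saks in the adjacent Lemma \ref{usc1}. To be fair, the paper states Lemma \ref{lemma-continuousmap} under (A1) alone and its one-line limit passage is silent on the same point, so its argument too is only complete where (A2) is in force --- which it is everywhere the lemma is used; but since you explicitly surfaced the obstruction, you should replace the relaxed-control/correlation argument by the (A2) splitting rather than claim the identification under (A1) alone.
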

\begin{proof}  Let $\hat v^m_1 \to \hat v_1 $ in $\hat{\mathcal S}_1$. Then using Lemma \ref{lemma-estimates1} (ii),
we have for each $R > 0$,
\begin{equation}\label{eq1continuousmap}
\sup_{m  \geq 1} \|\psi^{\hat v^m_1}_{\alpha, 2} \|_{1,2,p; (0, \Theta) \times B_R} < \infty.
\end{equation}
Now by compact embedding theorem  [see \cite{Adams}, Chapter 6], Banach-Alaoglu theorem, 
and a standard diagonalization argument,
there exist $\psi \in W^{1,2,p}_{\rm loc}((0, \Theta ) \times \mathbb{R}^d) \cap C^{0, 1} ( (0, \ \Theta ) \times \mathbb{R}^d)$
 such that along a subsequence
\begin{equation}\label{eq2continuousmap}
\psi^{\hat v^m_1}_{\alpha, 2}  \to \psi \ {\rm in} \ C^{0, 1} ((0, \Theta) \times B_R),  \ R > 0.
\end{equation}
For $\varphi \in C^{\infty}_0 ( (0, \Theta) \times \mathbb{R}^d)$, from (\ref{discountedhjb1}) we have
\interdisplaylinepenalty=0
\begin{eqnarray*}
&&\int_0^\Theta \int_{\mathbb{R}^d} \alpha \theta \frac{\partial \psi^{\hat v^m_1}_{\alpha ,2}}{\partial \theta} \varphi d\theta dx \\ & = & \int_0^\Theta \int_{\mathbb{R}^d}
\inf_{v_2 \in V_2} \Big[ \langle b(x, \hat v^m_1(\theta, x), v_2), \nabla_x \psi^{\hat v^m_1}_{\alpha, 2}  \rangle 
+ \theta r_2(x, \hat v^m_1(\theta, x), v_2) \psi^{\hat v^m_1}_{\alpha, 2} \Big] \varphi d \theta d x \\
 &+& \frac{1}{2} \ \int_0^\Theta \int_{\mathbb{R}^d} {\rm trace}(a(x) \nabla^2_x \psi^{\hat v^m_1}_{\alpha, 2} ) \varphi d \theta dx.
\end{eqnarray*}
Now using appropriate integration by parts formulae for the second term on the r.h.s. and 
the term on the l.h.s above and then using (\ref{eq2continuousmap}),
it follows that $\psi$ is a solution in the sense of distribution to (\ref{discountedhjb1}). Since $\psi(0, x) =1$, it is a solution in $\bigcap_{ p \geq d+1} W^{1,2,p}_{{\rm loc}} (0, \Theta)
\times \mathbb{R}^d)\cap C_b ((0, \Theta)\times \mathbb{R}^d)$ of the p.d.e. (\ref{discountedhjb1}). Hence by Theorem \ref{pdeexistence},
it follows that $\psi = \psi^{\hat v_1}_{\alpha, 1}$. This proves the continuity of the first map. The proof of the
continuity of  the second map follows by a  symmetric argument. 
\end{proof}

\vspace{.1in}
For $(\hat v_1,\hat v_2) \in \hat{S}_1 \times \hat{S}_2$, define
\begin{equation}\label{optimalresponses}
H(\hat v_1, \hat v_2) \ = \ H_1(\hat v_2) \times H_2(\hat v_1),
\end{equation}
where
\begin{equation*}\label{optimalrespeonse1}
H_1(\hat v_2) \ = \ \Big\{\hat v^*_1 \in \hat{\mathcal S}_1 | F_1 (x, \hat v^*_1(\theta, x), \hat v_2(\theta, x)) = 
\inf_{v_1 \in V_1} F_1(x,  v_1,\hat v_2(\theta, x)) \ {\rm a.e.} \ \theta, x \Big\},
\end{equation*}
\begin{equation*}
F_1(x, v_1, \hat v_2(\theta, x) ) \ = \ \langle b(x, v_1, \hat v_2(\theta, x)) , \nabla_x \psi^{\hat v_2}_{\alpha, 1} \rangle
+ \theta r_1(x, v_1, \hat v_2(\theta, x)), 
\end{equation*}
$\theta , x \in (0, \Theta) \times \mathbb{R}^d, \ v_1 \in V_1, \hat v_2 \in \hat{\mathcal S}_2,$
\begin{equation*}\label{optimalrespeonse2}
H_2(\hat v_1) \ = \ \Big\{\hat v^*_2 \in \hat{\mathcal S}_2 | F_2 (x, \hat v_1(\theta, x), \hat v^*_2(\theta, x)) = 
\inf_{v_2 \in V_2} F_1(x, \hat v_1(\theta, x), v_2) \ {\rm a.e.} \ \theta, x \Big\},
\end{equation*}
\begin{equation*}
F_2(x, \hat v_1(\theta, x), v_2 ) \ = \ \langle b(x,\hat v_1(\theta, x) , v_2) , \nabla_x \psi^{\hat v_1}_{\alpha, 2} \rangle
+ \theta r_2(x,\hat v_1(\theta, x), v_2), 
\end{equation*}
$\theta , x \in (0, \Theta) \times \mathbb{R}^d, \ v_2 \in V_2, \hat v_1 \in \hat{\mathcal S}_1,$

\vspace{.1in}

Using linear structure of $F_i$ and the compactness
of $\hat{\mathcal S}_i, i =1,2$, it is easy to see that $H(\hat v_1, \hat v_2)$ is nonempty, convex and compact subset of 
$\hat{\mathcal S}_1 \times \hat{\mathcal S}_2$.

To prove the existence of a Nash equilibrium, we prove the upper semi-continuity (u.s.c.) of the map
$(\hat v_1, \hat v_2) \mapsto H(\hat v_1, \hat v_2) $ from $\hat{\mathcal S}_1 \times \hat{\mathcal S}_2 \to 
2^{\hat{\mathcal S}_1} \times 2^{\hat{\mathcal S}_2}$.  To establish the u.s.c. we need some  
additive   structure on the drift of the state dynamics and the cost function (ADAC) given as follows. \\

\noindent {\bf (A2)} We assume that $\Bar{b} : \mathbb{R}^d \times U_1 \times U_2 \to \mathbb{R}^d$
and $\Bar{r}_i : \mathbb{R}^d \times U_1 \times U_2 \to [0, \ \infty), \ i =1,2$ take the additive
structure given by
\interdisplaylinepenalty=0
\begin{eqnarray*}
\Bar{b}(x, u_1, u_2) & = & \Bar{b}_1(x, u_1) + \Bar{b}_2(x, u_2), \\
\Bar{r}_i (x, u_1, u_2) & = & \Bar{r}_{i1}(x, u_1) + \Bar{r}_{i2}(x, u_2), x \in \mathbb{R}^d, u_1 \in U_1,
u_2 \in U_2, i =1,2.
\end{eqnarray*}

The conditions in (A1)(i) are assumed for $\bar{b}_i, \bar{r}_{i1}, \bar{r}_{i2}, i=1,2 $.

\vspace{.1in}
\begin{lemma}\label{usc1} Assume (A1) and (A2). Then the map 
$(\hat v_1, \hat v_2) \mapsto H(\hat v_1, \hat v_2) $ from $\hat{\mathcal S}_1 \times \hat{\mathcal S}_2 \to 
2^{\hat{\mathcal S}_1} \times 2^{\hat{\mathcal S}_2}$ is u.s.c.
\end{lemma}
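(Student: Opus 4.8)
The plan is to exploit the product structure $H=H_1\times H_2$ and establish u.s.c. of each factor separately; since $\hat{\mathcal S}_1\times\hat{\mathcal S}_2$ is compact metrizable, a compact-valued map into it is u.s.c. iff it has closed graph, which I will verify sequentially. Concentrating on the factor $\hat v_1\mapsto H_2(\hat v_1)$, I would take $\hat v^m_1\to\hat v_1$ in $\hat{\mathcal S}_1$ together with selectors $\hat w^m_2\in H_2(\hat v^m_1)$ satisfying $\hat w^m_2\to\hat w_2$ in $\hat{\mathcal S}_2$, and show $\hat w_2\in H_2(\hat v_1)$. By definition of $H_2$, for a.e.\ $(\theta,x)$ and every $v_2\in V_2$,
\[
F_2(x,\hat v^m_1(\theta,x),\hat w^m_2(\theta,x))\ \le\ F_2(x,\hat v^m_1(\theta,x),v_2).
\]

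The decisive step is the additive structure (A2). Writing the induced relaxed splitting $b=b_1+b_2$ and $r_2=r_{21}+r_{22}$, the terms of $F_2$ that carry $\hat v^m_1$, namely $\langle b_1(x,\hat v^m_1(\theta,x)),\nabla_x\psi^{\hat v^m_1}_{\alpha,2}\rangle+\theta r_{21}(x,\hat v^m_1(\theta,x))$, are independent of the minimizing variable $v_2$ and hence cancel in the inequality above. Thus $\hat w^m_2$ is characterized by
\[
\langle b_2(x,\hat w^m_2(\theta,x)),\nabla_x\psi^{\hat v^m_1}_{\alpha,2}\rangle+\theta r_{22}(x,\hat w^m_2(\theta,x))\ \le\ \langle b_2(x,v_2),\nabla_x\psi^{\hat v^m_1}_{\alpha,2}\rangle+\theta r_{22}(x,v_2)
\]
for a.e.\ $(\theta,x)$ and all $v_2\in V_2$. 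This is exactly the reduction that makes the limit tractable: the direct coupling between the two weakly convergent control sequences $\hat v^m_1$ and $\hat w^m_2$ has been removed, and $\hat v^m_1$ now influences the minimization only through the gradient $\nabla_x\psi^{\hat v^m_1}_{\alpha,2}$.

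Next I would integrate this inequality against a nonnegative $f\in L^1\cap L^2$ with compact support in $(0,\Theta)\times\mathbb{R}^d$ and against an arbitrary measurable comparison map $w:(0,\Theta)\times\mathbb{R}^d\to V_2$, and then let $m\to\infty$. By Lemma \ref{lemma-continuousmap}, $\psi^{\hat v^m_1}_{\alpha,2}\to\psi^{\hat v_1}_{\alpha,2}$ in $C^{0,1}$, so $\nabla_x\psi^{\hat v^m_1}_{\alpha,2}\to\nabla_x\psi^{\hat v_1}_{\alpha,2}$ uniformly on the compact support of $f$; since $b_2$ is bounded, replacing $\nabla_x\psi^{\hat v^m_1}_{\alpha,2}$ by its limit contributes an error that vanishes. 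With the gradient frozen at $\nabla_x\psi^{\hat v_1}_{\alpha,2}$, each remaining integrand has the form $\int_{U_2}g(\theta,x,u_2)\,\hat w^m_2(\theta,x)(du_2)$ with $g(\theta,x,u_2)=\langle\bar b_2(x,u_2),\nabla_x\psi^{\hat v_1}_{\alpha,2}(\theta,x)\rangle$ (respectively $\theta\bar r_{22}(x,u_2)$), which is continuous and bounded on the support of $f$; the convergence criterion (\ref{convergencecriterion1}) then permits passing $\hat w^m_2\to\hat w_2$ inside the integral.

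Passing to the limit produces the integrated inequality with $\hat w_2$ and $\nabla_x\psi^{\hat v_1}_{\alpha,2}$ for every such $f\ge0$ and every $w$; since $f\ge0$ is arbitrary I recover, a.e.\ $(\theta,x)$, the pointwise inequality with $\hat w_2(\theta,x)$ against any fixed measurable $w(\theta,x)$, and choosing $w$ to be a measurable minimizing selector (which exists by the selection result of Benes used earlier) upgrades this to equality with the infimum,
\[
\langle b_2(x,\hat w_2(\theta,x)),\nabla_x\psi^{\hat v_1}_{\alpha,2}\rangle+\theta r_{22}(x,\hat w_2(\theta,x))\ =\ \inf_{v_2\in V_2}\Big[\langle b_2(x,v_2),\nabla_x\psi^{\hat v_1}_{\alpha,2}\rangle+\theta r_{22}(x,v_2)\Big].
\]
Re-adding the $\hat v_1$-dependent terms (again legitimate by additivity) shows $\hat w_2$ minimizes $F_2(x,\hat v_1(\theta,x),\cdot)$, i.e.\ $\hat w_2\in H_2(\hat v_1)$. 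A symmetric argument gives u.s.c.\ of $\hat v_2\mapsto H_1(\hat v_2)$, and u.s.c.\ of the product map $H$ follows. The main obstacle is precisely the cross term pairing the two controls: a product of two weakly convergent sequences need not converge, and (A2) is what converts it into a weak-times-strong product, $\hat w^m_2$ tested against the uniformly convergent gradient $\nabla_x\psi^{\hat v^m_1}_{\alpha,2}$, which does pass to the limit.
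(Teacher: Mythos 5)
Your proof is correct and reaches the paper's conclusion, but by a genuinely different mechanism at the key limit step. The paper also argues sequentially (a closed-graph/limit-point argument, using compactness of $\hat{\mathcal S}_i$), and also leans on (A2) and Lemma \ref{lemma-continuousmap}, but it works with the full expression $F$: it shows that $\langle b(x,\hat{\bar v}^m_1,\hat v^m_2),\nabla_x\psi^{\hat v^m_2}_{\alpha,1}\rangle+\theta r_1(x,\hat{\bar v}^m_1,\hat v^m_2)$ converges weakly in $L^2_{\rm loc}$ (here (A2) is exactly what turns the cross terms into weak-times-strong pairings, as you observe at the end), then invokes the Banach--Saks theorem to upgrade to strong convergence of convex combinations and hence a.e.\ convergence along a subsequence, and finally passes to the limit in the pointwise inequality $F(x,\hat{\tilde v}_1,\hat v^m_2)\ge F(x,\hat{\bar v}^m_1,\hat v^m_2)$ for each fixed comparison strategy. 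You instead exploit (A2) one step earlier: the terms carrying the non-minimizing player's control are constant in the minimizing variable and cancel from the optimality condition, so $\hat v^m_1$ survives only through the gradient $\nabla_x\psi^{\hat v^m_1}_{\alpha,2}$, which converges locally uniformly by Lemma \ref{lemma-continuousmap}; the limit then passes directly through the weak* criterion (\ref{convergencecriterion1}), with the pointwise condition recovered by testing against arbitrary $f\ge 0$ and a single Benes minimizing selector. Your route avoids Banach--Saks and the convex-combination bookkeeping (which the paper handles with an acknowledged abuse of notation), and it isolates precisely why (A2) is needed; the paper's route is slightly more robust in that it manipulates $F$ as a whole and would carry over to any structure under which the weak limit of the full pairing can be identified, not only the cancellation-friendly additive one. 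Two small points you should tighten: the criterion (\ref{convergencecriterion1}) requires $g\in C_b((0,\Theta)\times\mathbb{R}^d\times U_2)$, while your $g(\theta,x,u_2)=\langle\bar b_2(x,u_2),\nabla_x\psi^{\hat v_1}_{\alpha,2}(\theta,x)\rangle$ is only locally bounded, so multiply by a continuous cutoff equal to $1$ on the support of $f$; and the a.e.\ null set in your limiting inequality depends on the comparison map $w$, which is harmless only because the one minimizing selector suffices --- worth saying explicitly.
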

\begin{proof} Let $(\hat v^m_1, \hat v^m_2) \to (\hat v_1, \hat v_2) \in \hat{\mathcal S}_1 \times \hat{\mathcal S}_2$
and $\hat {\Bar{v}}^m_1 \in H_1(\hat v^m_2), m \geq 1$.  Since $\hat{\mathcal S}_1$ is compact, it follows that
there exists a  limit point, say $\hat{\Bar{v}}_1$ for $\{ \hat{\Bar{v}}^m_1\}$. By an abuse of notation, we 
write $\hat{\Bar{v}}^m_1 \to \hat{\Bar{v}}_1$ in $\hat{\mathcal S}_1$. 
Now using (A2), Lemma \ref{lemma-continuousmap} and the topology of $\hat{\mathcal S}_i, i =1,2$, it follows that $$\langle b(x, \hat{\Bar{v}}^m_1(\theta, x), \hat v^m_2(\theta, x)),\nabla_x \psi^{\hat v^m_2}_{\alpha, 1} \rangle
+ \theta r_1(x, \hat{\Bar{v}}^m_1(\theta, x),\hat v^m_2(\theta, x))$$
converges weakly in $ L^{2}_{{\rm loc}} (0, \Theta)
\times \mathbb{R}^d)$ to
$$\langle b(x, \hat{\Bar{v}}_1(\theta, x), \hat v_2(\theta, x)),\nabla_x \psi^{\hat v_2}_{\alpha, 1} \rangle
+ \theta r_1(x, \hat{\Bar{v}}_1(\theta, x), \hat v_2(\theta, x)).$$
Therefore by Banach-Saks theorem any sequence of convex combination of the former converges strongly in $ L^{2}_{{\rm loc}} (0, \Theta)
\times \mathbb{R}^d)$  to the latter. Hence along a suitable subsequence 
\begin{equation}\label{eq1usc}
\lim_{m \to \infty} F(x, \hat{ \Bar{v}}^m_1(\theta, x), \hat v^m_2(\theta, x)) 
= F(x, \hat{\Bar{v}}_1(\theta, x), \hat v_2(\theta, x)), \ {\rm a.e.\ in}\ \theta, x.
\end{equation}
(In above we denote the convex combination coming from the Banach-Saks theorem by 
$F(x, \hat{\Bar{v}}^m_1(\theta, x), \hat{v}^m_2(\theta, x))$ itself by an abuse of notation.)
Now fix $\hat {\tilde{v}}_1 \in \hat S_1$ and use analogous arguments above to conclude that 
\begin{equation}\label{eq2usc}
\lim_{m \to \infty} F(x,  \hat {\tilde{v}}_1(\theta, x), \hat v^m_2(\theta, x)) 
= F(x, \hat {\tilde{v}}_1(\theta, x), \hat v_2(\theta, x)), \ {\rm a.e.\ in}\ \theta, x.
\end{equation} 
Since $\hat{\Bar{v}}^m_1 \in H_1(\hat v^m_2),$ we have
\[
F(x,  \hat {\tilde{v}}_1(\theta, x), \hat v^m_2(\theta, x))  \geq F(x,  \hat{\Bar{v}}^m_1(\theta, x), \hat v^m_2(\theta, x)) , \ m \geq 1.
\]
Hence from (\ref{eq1usc}) and (\ref{eq2usc}), we obtain
\[
F(x,  \hat {\tilde{v}}_1(\theta, x), \hat v_2(\theta, x))  \geq F(x,  \hat{\Bar{v}}_1(\theta, x), \hat v_2(\theta, x)) , \ \hat {\tilde{v}}_1 \in \hat S_1.
\]
Hence $\hat{\Bar{v}}_1 \in H_1 (\hat v_2)$. By a symmetric argument, one can show that for 
$\hat{\Bar{v}}^m_2 \in H_2(\hat v^m_1)$ and any limit point $\hat{\Bar{v}}_2$ of $\{ \hat{\Bar{v}}^m_2 \}$, we have
$\hat{\Bar{v}}_2 \in H_2(\hat v_1)$. This proves that the map is u.s.c. 
\end{proof}
\vspace{0.1in} 
\begin{theorem}\label{Nashequilibriumdiscounted} Assume (A1) and (A2). Then there exists an $\alpha$-discounted
Nash equilibrium in  $\hat{\mathcal S}_1 \times \hat{\mathcal S}_2$.
\end{theorem}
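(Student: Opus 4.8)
The plan is to realize the Nash equilibrium as a fixed point of the set-valued best-response map $H$ defined in \eqref{optimalresponses}, via the Fan--Glicksberg fixed point theorem (the set-valued generalization of Schauder's theorem to subsets of a locally convex topological vector space). First I would record that the ambient space $\hat{\mathcal S}_1 \times \hat{\mathcal S}_2$, carrying the product of the two weak* topologies introduced in Section~2, is a nonempty, convex, compact subset of a locally convex topological vector space; compactness is exactly the property noted when $\hat{\mathcal S}_i$ was defined. Next, the remark following the definition of $H$ already guarantees that $H(\hat v_1, \hat v_2) = H_1(\hat v_2) \times H_2(\hat v_1)$ is a nonempty, convex, compact subset of $\hat{\mathcal S}_1 \times \hat{\mathcal S}_2$ for every $(\hat v_1, \hat v_2)$, and Lemma~\ref{usc1} furnishes the upper semicontinuity of $(\hat v_1, \hat v_2) \mapsto H(\hat v_1, \hat v_2)$. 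With these three hypotheses in hand, Fan--Glicksberg applies and yields a fixed point $(\hat v^*_1, \hat v^*_2) \in H(\hat v^*_1, \hat v^*_2)$, i.e.\ $\hat v^*_1 \in H_1(\hat v^*_2)$ and $\hat v^*_2 \in H_2(\hat v^*_1)$.

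The second, and principal, step is to show that any such fixed point is an $\alpha$-discounted Nash equilibrium. Fix player $2$'s strategy at $\hat v^*_2$. By Theorem~\ref{pdeexistence}(ii) the value function $\psi^{\hat v^*_2}_{\alpha, 1}$ solves the HJB equation \eqref{discountedhjb2} and equals the infimum of $J^{v_1, \hat v^*_2}_{\alpha, 1}(\theta, x)$ over all $v_1 \in {\mathcal M}_1$. The membership $\hat v^*_1 \in H_1(\hat v^*_2)$ means precisely that $\hat v^*_1$ is a minimizing selector in the Hamiltonian $F_1$, so that \eqref{discountedhjb2} holds with its infimum attained at $\hat v^*_1$. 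Re-running the It\^o--Dynkin argument of Lemma~\ref{appendixlemma-rep1} with this selector substituted for the generic control converts the inequality into the equality
\[
J^{\hat v^*_1, \hat v^*_2}_{\alpha, 1}(\theta, x) \ = \ \psi^{\hat v^*_2}_{\alpha, 1}(\theta, x) \ = \ \inf_{v_1 \in {\mathcal M}_1} J^{v_1, \hat v^*_2}_{\alpha, 1}(\theta, x),
\]
whence $J^{\hat v^*_1, \hat v^*_2}_{\alpha, 1}(\theta, x) \le J^{v_1, \hat v^*_2}_{\alpha, 1}(\theta, x)$ for all $v_1 \in {\mathcal M}_1$. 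A symmetric argument using $\hat v^*_2 \in H_2(\hat v^*_1)$ and Theorem~\ref{pdeexistence}(i) gives the corresponding inequality for player $2$. Since $\log$ is increasing, these inequalities for the criterion \eqref{discountedcost1} are equivalent to the Nash inequalities for the original criterion \eqref{discountedcost}, so $(\hat v^*_1, \hat v^*_2)$ is the required Nash equilibrium in $\hat{\mathcal S}_1 \times \hat{\mathcal S}_2$.

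I expect the main obstacle to lie not in the fixed-point machinery, which is assembled directly from the nonemptiness, convexity and compactness of the values together with the upper semicontinuity of Lemma~\ref{usc1}, but in the verification step that a minimizing selector of the Hamiltonian actually realizes the optimal cost over all Markov strategies of the responding player. This requires re-examining the It\^o--Dynkin computation of Lemma~\ref{appendixlemma-rep1} in the limit $\kappa \to 0$, checking that the eventually stationary Markov strategy $\hat v^*_i(\theta e^{-\alpha t}, X(t))$ associated with the selector is admissible and that the dominated convergence used in passing $R \to \infty$ (and $\kappa \to 0$) remains valid under the pair $(\hat v^*_1, \hat v^*_2)$. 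The uniform bounds of Lemma~\ref{lemma-estimates1} are precisely what make this passage to the limit legitimate.
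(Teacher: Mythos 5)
Your proposal is correct and follows essentially the same route as the paper: the paper likewise obtains the fixed point $(\hat v^*_1, \hat v^*_2) \in H(\hat v^*_1, \hat v^*_2)$ from Lemma~\ref{usc1} together with Fan's fixed point theorem (your Fan--Glicksberg citation is the same result), and then verifies the Nash property by noting that the fixed point makes the infima in the coupled HJB equations attained at $(\hat v^*_1, \hat v^*_2)$, so the stochastic representation of Theorem~\ref{pdeexistence} yields $\psi^{\hat v^*_2}_{\alpha,1} = \inf_{v_1 \in {\mathcal M}_1} J^{v_1, \hat v^*_2}_{\alpha,1} = J^{\hat v^*_1, \hat v^*_2}_{\alpha,1}$ and symmetrically for player~2. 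Your closing remarks on re-running the It\^o--Dynkin argument and on monotonicity of the logarithm match the paper's (largely implicit) treatment of these points.
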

\begin{proof} From Lemma \ref{usc1} and Fan's fixed point theorem \cite{Fan}, there exists a fixed point 
$(\hat v^*_1, \hat v^*_2) \in \hat{\mathcal S}_1 \times \hat{\mathcal S}_2$,
for the map $(\hat v_1, \hat v_2) \mapsto H(\hat v_1,\hat v_2) $ from $\hat{\mathcal S}_1 \times \hat{\mathcal S}_2 \to 
2^{\hat{\mathcal S}_1} \times 2^{\hat{\mathcal S}_2}$, i.e., 
\[
(\hat v^*_1, \hat v^*_2) \in H (\hat v^*_1, \hat v^*_2) .
\]
This implies that $(\psi^{\hat v^*_2}_{\alpha, 1}, \psi^{\hat v^*_1}_{\alpha, 2})$ satisfies 
\interdisplaylinepenalty=0
\begin{eqnarray}\label{coupleddiscountedhjb}
\alpha \theta \frac{\partial \psi^{\hat v^*_2}_{\alpha, 1}}{\partial \theta} & = & 
\inf_{v_1 \in V_1} \Big[ \langle b(x, v_1, \hat v^*_2(\theta, x)), \nabla_x \psi^{\hat v^*_2}_{\alpha, 1} \rangle 
+ \theta r_1(x, v_1, \hat v^*_2(\theta, x)) \psi^{\hat v^*_2}_{\alpha, 1} \Big] \nonumber \\ \nonumber 
&& + \frac{1}{2} \ {\rm trace}(a(x) \nabla^2_x \psi^{\hat v^*_2}_{\alpha, 1})  \\ \nonumber
& = & 
\langle b(x, \hat v^*_1(\theta, x), \hat v^*_2(\theta, x)), \nabla_x \psi^{\hat v^*_2}_{\alpha, 1} \rangle 
+ \theta r_1(x, \hat v^*_1(\theta, x), \hat v^*_2(\theta, x)) \psi^{\hat v^*_2}_{\alpha, 1} \Big] \\ \nonumber
&& + \frac{1}{2} \ {\rm trace}(a(x) \nabla^2_x \psi^{\hat v^*_2}_{\alpha, 1}) , \\ 
&& \psi^{\hat v^*_2}_{\alpha, 1} (0, x) \ = \  1, \theta \in (0, \ \Theta) , \ x \in \mathbb{R}^d \\  \nonumber 
\alpha \theta \frac{\partial \psi^{\hat v^*_1}_{\alpha, 2}}{\partial \theta} & = & 
\inf_{v_2 \in V_2} \Big[ \langle b(x, \hat v^*_1(\theta, x), v_2), \nabla_x \psi^{\hat v^*_1}_{\alpha, 2} \rangle 
+ \theta r_2(x, \hat v^*_1(\theta, x), v_2) \psi^{\hat v^*_1}_{\alpha, 2} \Big]   \\ \nonumber
&& + \frac{1}{2} \ {\rm trace}(a(x) \nabla^2_x \psi^{\hat v^*_1}_{\alpha, 2})  \nonumber \\ \nonumber
& = & 
\langle b(x, \hat v^*_1(\theta,  x), \hat v^*_2(\theta, x)), \nabla_x \psi^{\hat v^*_1}_{\alpha, 2} \rangle 
+ \theta r_2(x, \hat v^*_1\theta, (x), \hat v^*_2(\theta, x) \psi^{\hat v^*_1}_{\alpha, 2} \Big]  \\  \nonumber 
&& + \frac{1}{2} \ {\rm trace}(a(x) \nabla^2_x \psi^{\hat v^*_1}_{\alpha, 2}) , \\ \nonumber 
\psi^{\hat v^*_1}_{\alpha, 2} (0, x) & = & 1, \theta \in (0, \ \Theta) , \ x \in \mathbb{R}^d .
\end{eqnarray}
Now using the representation of the solutions to the p.d.e.s from Theorem \ref{pdeexistence}, it follows that
\interdisplaylinepenalty=0
\begin{eqnarray*}
\psi^{\hat v^*_2}_{\alpha, 1} (\theta, x) & = & \inf_{v_1 \in {\mathcal M}_1} J^{v_1, \hat v^*_2}_{\alpha, 1}(\theta, x)\\
& = & J^{\hat v^*_1, \hat v^*_2}_{\alpha, 1}(\theta, x), \\
\psi^{\hat v^*_1}_{\alpha, 2} (\theta, x) & = & \inf_{v_2 \in {\mathcal M}_2} J^{\hat v^*_1, v_2}_{\alpha, 2}(\theta, x)\\
& = & J^{\hat v^*_1, \hat v^*_2}_{\alpha, 2}(\theta, x).
\end{eqnarray*}
Thus, we obtain
\interdisplaylinepenalty=0
\begin{eqnarray*}
 J^{v_1,\hat v^*_2}_{\alpha, 1}(\theta, x) & \geq  & J^{\hat v^*_1, \hat v^*_2}_{\alpha, 1}(\theta, x), \ \forall \ 
v_1 \in {\mathcal M}_1 , \\
 J^{\hat v^*_1, v_2}_{\alpha, 2}(\theta, x) & \geq  & J^{\hat v^*_1, \hat v^*_2}_{\alpha, 2}(\theta, x), \ \forall \ 
v_2 \in {\mathcal M}_2.
\end{eqnarray*}
This proves the existence of a Nash equilibrium which is a pair of eventually stationary Markov strategies.
\end{proof}

\section{Analysis of Ergodic Cost Criterion} In this section, we study the ergodic cost criterion for the 
risk-sensitive stochastic dynamic games.  Since cost criterion is dependent on the limiting behavior of the 
state dynamics, it is quite natural to assume certain stable behavior of the dynamics.
We assume the following Lyapunov type stability condition:
 
\vspace{.1in}

\noindent {\bf (A3)} There exists constants $\delta > 0, c > 0$, a compact set $C$ of $\mathbb{R}^d$
and $W \in C^2(\mathbb{R}^d)$ such that
\begin{eqnarray*}
W & \geq & 1 \\
{\mathcal L} W(x, v_1, v_2) & \leq & - 2 \delta W(x) + c I_C (x), \ \forall \ x \in \mathbb{R}^d, \, 
v_1 \in V_1, v_2 \in V_2,
\end{eqnarray*}
where for $\ x \in \mathbb{R}^d, \, v_1 \in V_1, v_2 \in V_2, \varphi \in C^2(\mathbb{R}^d)$,
\begin{equation}\label{generator-state}
{\mathcal L} \varphi (x, v_1, v_2) \ = \ \langle b(x, v_1, v_2), \nabla_x \varphi(x) \rangle + \frac{1}{2} \,
{\rm trace}(a(x) \nabla^2_x \varphi(x)) .
\end{equation}

We also assume the following technical assumption.

\vspace{.1in}
\noindent {\bf (A4)} (Small cost condition) 
\[
\theta \|r_k\|_{\infty} \leq \delta, \ k = 1,2 , \ \theta \in (0, \Theta ).
\]
Now we establish some properties of the controlled diffusion (\ref{statedynamics}) needed later.
 Set
\begin{equation}\label{aset}
C_0 \ = \ \{ x \in \mathbb{R}^d | W(x) > 1 + \frac{c}{\delta} \}.
\end{equation}
Note that $C_0$ is the complement of a compact set in $\mathbb{R}^d$. 

\vspace{.1in}
\begin{lemma}\label{expobound} Assume (A1) and (A3). Let $X$ be the process (\ref{statedynamics}) corresponding to $(v_1, v_2) \in {\mathcal M}_1 \times {\mathcal M}_2$. Then for each $y \in C_0$ and $r > 0$ 
such that $B(y, r) \subseteq C_0$, we have
\[
E^{v_1,v_2}_x [ e^{\delta \tau^r_y} ] \ \leq \ W(x) , \ x \in \mathbb{R}^d,
\]
where $\tau^r_y = \inf \{ t \geq 0 | X(t) \in B(y, r) \}$.
\end{lemma}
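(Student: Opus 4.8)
The plan is to exploit the Lyapunov inequality in (A3) to show that $e^{\delta t} W(X(t))$, stopped at $\tau^r_y$, is a nonnegative supermartingale, and then to read off the exponential moment bound from its initial value $W(x)$. Fix $y \in C_0$ and $r>0$ with $B(y,r)\subseteq C_0$, fix the initial point $x$ and the strategy pair $(v_1,v_2)$, and abbreviate $\tau=\tau^r_y$. Since $W\in C^2(\mathbb{R}^d)$ but is only known to grow at infinity, I would first localize by introducing $\tau_R=\inf\{t\ge 0 : \|X(t)\|>R\}$; under (A1) the process is non-explosive, so $\tau_R\uparrow\infty$ a.s. as $R\to\infty$.

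Then apply the It\^o--Dynkin formula to the $C^{1,2}$ function $(t,z)\mapsto e^{\delta t}W(z)$ along $X$ on the interval $[0,t\wedge\tau\wedge\tau_R]$. Because $W$ and its first two derivatives are bounded on $B(0,R)$ and the coefficients are bounded, the martingale part has zero expectation, giving
\[
E^{v_1,v_2}_x\big[e^{\delta(t\wedge\tau\wedge\tau_R)}W(X(t\wedge\tau\wedge\tau_R))\big] = W(x) + E^{v_1,v_2}_x\!\int_0^{t\wedge\tau\wedge\tau_R} e^{\delta s}\big(\delta W + \mathcal{L}W\big)(X(s),v_1(s,X(s)),v_2(s,X(s)))\,ds.
\]
By (A3), $\delta W + \mathcal{L}W \le -\delta W + c\,I_C$. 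The crucial step is to check that this quantity is $\le 0$ whenever $X(s)\notin B(y,r)$, so that the integral term is nonpositive. On $C_0$ this is immediate: $W>1+\tfrac{c}{\delta}$ forces $-\delta W + c\,I_C \le -\delta W + c < -\delta < 0$, which is precisely where the factor $2\delta$ in (A3) is used. The point $X(s)$ can also lie in the compact complement $C_0^c$, and there one must control the $c\,I_C$ term using $W\ge 1$; this is the part that requires the most care.

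Granting that the integrand is nonpositive up to $\tau$, the displayed identity yields $E^{v_1,v_2}_x[e^{\delta(t\wedge\tau\wedge\tau_R)}W(X(t\wedge\tau\wedge\tau_R))]\le W(x)$. Since $W\ge 1$ everywhere, $e^{\delta(t\wedge\tau\wedge\tau_R)}\le e^{\delta(t\wedge\tau\wedge\tau_R)}W(X(t\wedge\tau\wedge\tau_R))$, whence $E^{v_1,v_2}_x[e^{\delta(t\wedge\tau\wedge\tau_R)}]\le W(x)$. I would then let $R\to\infty$ and $t\to\infty$ and invoke the monotone convergence theorem (the integrand is monotone in both $R$ and $t$) to conclude $E^{v_1,v_2}_x[e^{\delta\tau}]\le W(x)$; as a byproduct this shows $\tau<\infty$ a.s. The main obstacle is exactly the verification of the sign of $\delta W + \mathcal{L}W$ off the ball on the compact set $C$: on $C_0$ the estimate is clean, but on $C_0^c$ (where the indicator $I_C$ is active) one must combine the normalization $W\ge 1$ with the geometry $B(y,r)\subseteq C_0$, and, if necessary, use the strong Markov property to restart the excursions that meet $C$ before time $\tau$.
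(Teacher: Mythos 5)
Your overall scaffolding (localization by $\tau_R$, It\^o--Dynkin applied to $e^{\delta t}W$, Fatou/monotone convergence at the end) matches the paper's, but the core of your argument rests on a claim that you yourself flag and never establish, and which is false in general: that $\delta W + \mathcal{L}W \le 0$ off the ball $B(y,r)$. By (A3) the best pointwise bound is $\delta W + \mathcal{L}W \le -\delta W + c\,I_C$, and on the set where $I_C=1$ and $W$ is close to its floor value $1$ (i.e.\ on $C\cap C_0^c$, which need not be empty), this bound is as large as $c-\delta$, which is strictly positive whenever $c>\delta$; nothing in (A3) rules this out, and the geometry $B(y,r)\subseteq C_0$ is of no help here since this bad set is disjoint from $C_0$. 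So ``granting that the integrand is nonpositive'' is precisely the gap: the stopped process $e^{\delta t}W(X(t))$ is in general \emph{not} a supermartingale. Your suggested repair via the strong Markov property (``restart the excursions that meet $C$'') is not developed and would not by itself produce the clean bound by $W(x)$, because excursions through $C$ genuinely add a positive contribution that must be paid for somewhere.

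The paper pays for it with the terminal value, not with the sign of the drift. It integrates the crude uniform bound $\delta W + \mathcal{L}W \le -\delta W + c \le -\delta + c$ (using only $W\ge 1$) over $[0,\tau^r_y\wedge\tau_R]$, which yields
\[
E^{v_1,v_2}_x\Big[e^{\delta(\tau^r_y\wedge\tau_R)}W\big(X(\tau^r_y\wedge\tau_R)\big)\Big] \ \le\ \frac{c}{\delta}\,E^{v_1,v_2}_x\Big[e^{\delta(\tau^r_y\wedge\tau_R)}\Big] + W(x),
\]
and then lets $R\to\infty$ via Fatou. The hypothesis $B(y,r)\subseteq C_0$ is used where it actually matters: at the hitting time, $W(X(\tau^r_y)) \ge 1+\frac{c}{\delta}$, so the left side dominates $\big(1+\frac{c}{\delta}\big)E^{v_1,v_2}_x\big[e^{\delta\tau^r_y}\big]$, the term $\frac{c}{\delta}E^{v_1,v_2}_x\big[e^{\delta\tau^r_y}\big]$ is absorbed, and what remains is exactly $E^{v_1,v_2}_x[e^{\delta\tau^r_y}]\le W(x)$. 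In other words, the role of $C_0$ is not to make the drift of $e^{\delta t}W(X(t))$ nonpositive along the path, but to guarantee enough surplus in $W$ at the terminal point to cancel the positive contribution accumulated on $C$; this absorption step is the idea missing from your proposal.
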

\begin{proof} Fix $x \in \mathbb{R}^d$ and $R > 0$ large. Let
\[
\tau_R \ = \ \inf \{ t \geq 0 | \| X(t) \| \geq R \}.
\]
Using It$\hat{\rm o}$-Dynkin formula, we obtain
\interdisplaylinepenalty=0
\begin{eqnarray*}
E^{v_1, v_2}_x \Big[ e^{\delta ( \tau^r_y \wedge \tau_R )} W(X(\tau^r_y \wedge \tau_R )) \Big]
& = &  E^{v_1, v_2}_x \Big[ \int^{\tau^r_y \wedge \tau_R }_0 e^{\delta s} [{\mathcal L}
W(X(s), v_1(s, X(s)), v_2(s, X(s)))\\
&& + \delta W(X(s))] ds \Big] + W(x) \\
& \leq & (-1 + \frac{c}{\delta}) E^{v_1, v_2}_x \Big[ e^{\delta(\tau^r_y \wedge \tau_R) } \Big] + W(x)\\
& \leq & \frac{c}{\delta} E^{v_1, v_2}_x \Big[ e^{\delta \tau^r_y } \Big].
\end{eqnarray*}
Now by letting $R \to \infty$, using Fatou's lemma, we obtain
\[
E^{v_1, v_2}_x \Big[ e^{\delta  \tau^r_x} W(X(\tau^r_y)) \Big] \leq 
\frac{c}{\delta} E^{v_1, v_2}_x \Big[ e^{\delta \tau^r_y } \Big] + W(x).
\]
Since $B(y, r) \subseteq C_0$, we have $W \geq 1 + \frac{c}{\delta}$ on $B(y, r)$. The desired estimate then follows.
\end{proof}

\vspace{.1in}
The following lemma follows by an application of It$\hat{\rm o}$-Dynkin formula applied to
$e^{\theta \int^t_0 r_k(X(s), v_1(s, X(s)), v_2(s, X(s))) ds} W(X(t)), 0 \leq t \leq \tau_R \wedge T$
 and using (A4).

\vspace{.1in}
\begin{lemma}\label{ergodicbound} Assume (A1), (A3) and (A4). Let $X$ be the process 
(\ref{statedynamics}) corresponding to
$(v_1, v_2) \in {\mathcal M}_1 \times {\mathcal M}_2$. Then for $k=1,2$, we have
\[
\begin{array}{ll}
E^{v_1, v_2}_x \Big[ e^{\theta \int^T_0 r_k(X(t), v_1(t, X(t)), v_2(t, X(t))) dt} W(X(T)) \Big] & \\
\ \ \ \  \leq \ [W(x) + c T] \, 
E^{v_1, v_2}_x \Big[ e^{\theta \int^T_0 r_k(X(t), v_1(t, X(t)), v_2(t, X(t))) dt}  \Big] , \ x \in \mathbb{R}^d, 
T > 0. & 
\end{array}
\]
\end{lemma}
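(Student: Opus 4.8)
The plan is to run the It\^o--Dynkin formula on the functional suggested in the remark preceding the statement and then exploit the sign structure coming from (A3) and (A4). Write $M_k(t) = \theta \int_0^t r_k(X(s), v_1(s,X(s)), v_2(s,X(s)))\,ds$ and consider $Z(t) = e^{M_k(t)} W(X(t))$. Since $W \in C^2(\mathbb{R}^d)$ and $M_k$ is a finite-variation (indeed nondecreasing) process with no martingale part, It\^o's formula gives, for the localizing stopping time $\tau_R = \inf\{t \ge 0 : \|X(t)\| \ge R\}$,
\[
E^{v_1,v_2}_x\big[ Z(T \wedge \tau_R)\big] = W(x) + E^{v_1,v_2}_x\Big[\int_0^{T\wedge\tau_R} e^{M_k(t)}\big( {\mathcal L}W(X(t), v_1, v_2) + \theta r_k(X(t),v_1,v_2) W(X(t)) \big)\,dt\Big],
\]
the stochastic integral $\int_0^{\cdot} e^{M_k(t)} \langle \nabla_x W(X(t)), \sigma(X(t))\,dW(t)\rangle$ being a genuine martingale on $[0, T\wedge\tau_R]$: on this interval $X$ stays in $B_R$, where $\nabla_x W$ and $\sigma$ are bounded, while $e^{M_k(t)} \le e^{\theta \|r_k\|_\infty T} \le e^{\delta T}$ by (A4).

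The second step is the pointwise drift bound. Combining (A3), namely ${\mathcal L}W \le -2\delta W + c\,I_C$, with (A4), which yields $\theta r_k W \le \delta W$ because $r_k \ge 0$ and $W \ge 1$, we obtain
\[
{\mathcal L}W(X(t), v_1, v_2) + \theta r_k(X(t),v_1,v_2) W(X(t)) \le -\delta W(X(t)) + c\,I_C(X(t)) \le c,
\]
since $W \ge 0$. Plugging this in and using that $M_k$ is nondecreasing, so $e^{M_k(t)} \le e^{M_k(T\wedge\tau_R)}$ for $t \le T\wedge\tau_R$, the integral is dominated by $c\,(T\wedge\tau_R)\,e^{M_k(T\wedge\tau_R)} \le cT\,e^{M_k(T\wedge\tau_R)}$. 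Hence
\[
E^{v_1,v_2}_x\big[ e^{M_k(T\wedge\tau_R)} W(X(T\wedge\tau_R))\big] \le W(x) + cT\, E^{v_1,v_2}_x\big[ e^{M_k(T\wedge\tau_R)}\big].
\]

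The final step is to let $R \to \infty$. Since the coefficients are bounded and the solution is nonexplosive, $\tau_R \uparrow \infty$ a.s., so $T\wedge\tau_R \uparrow T$ a.s. On the right-hand side $e^{M_k(T\wedge\tau_R)} \le e^{\delta T}$ is uniformly bounded, so dominated convergence gives $E^{v_1,v_2}_x[e^{M_k(T\wedge\tau_R)}] \to E^{v_1,v_2}_x[e^{M_k(T)}]$. On the left-hand side the integrand is nonnegative and converges a.s. to $e^{M_k(T)} W(X(T))$, so Fatou's lemma yields $E^{v_1,v_2}_x[e^{M_k(T)} W(X(T))] \le \liminf_R E^{v_1,v_2}_x[e^{M_k(T\wedge\tau_R)} W(X(T\wedge\tau_R))]$. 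Combining the two limits produces exactly the asserted inequality.

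I expect the only delicate point to be the interchange of limit and expectation: one must use Fatou (rather than dominated convergence) on the left because $W$ is a priori unbounded, and separately justify the martingale property of the stochastic integral via the localization at $\tau_R$. Everything else is a direct consequence of the sign conditions (A3)--(A4) together with the monotonicity of $t \mapsto M_k(t)$.
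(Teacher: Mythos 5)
Your proof is correct and takes exactly the route the paper intends: the paper gives only a one-line sketch (apply the It\^o--Dynkin formula to $e^{\theta\int_0^t r_k\,ds}\,W(X(t))$ on $0\le t\le \tau_R\wedge T$ and use (A4)), and your localization, the drift bound ${\mathcal L}W+\theta r_k W\le -\delta W+c\,I_C\le c$, and the passage to the limit via Fatou on the left and dominated convergence on the right supply precisely the missing details. One cosmetic point: what you actually derive is the slightly stronger bound $W(x)+cT\,E^{v_1,v_2}_x\big[e^{\theta\int_0^T r_k\,dt}\big]$, so the lemma's stated form is not reached ``exactly'' but follows immediately since $r_k\ge 0$ gives $E^{v_1,v_2}_x\big[e^{\theta\int_0^T r_k\,dt}\big]\ge 1$, whence $W(x)\le W(x)\,E^{v_1,v_2}_x\big[e^{\theta\int_0^T r_k\,dt}\big]$.
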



We now establish the following result which plays a crucial role in what follows.
\vspace{.1in}
\begin{theorem}\label{thm4.1} Assume (A1) and (A2). There exists $(\hat v^*_{1, \alpha}, \hat v^*_{2, \alpha}) \in 
\hat{\mathcal S}_1 \times  \hat{\mathcal S}_2$ such that 
$(\psi^{\hat v^*_{2,\alpha}}_{\alpha, 1}, \psi^{\hat v^*_{1, \alpha}}_{\alpha, 2})$ satisfies
\interdisplaylinepenalty=0
\begin{eqnarray*}
\alpha \theta \frac{\partial \psi^{ \hat v^*_{2,\alpha}}_{\alpha, 1}}{\partial \theta} & = & 
\inf_{v_1 \in V_1} \Big[ \langle b(x, v_1, \hat v^*_{2,\alpha}(\theta, x)), 
\nabla_x \psi^{\hat v^*_{2,\alpha}}_{\alpha, 1} \rangle 
+ \theta r_1(x, v_1,\hat v^*_{2,\alpha}(\theta, x)) \psi^{ \hat v^*_{2,\alpha}}_{\alpha, 1} \Big] \\ 
&& + \frac{1}{2} \ {\rm trace}(a(x) \nabla^2_x \psi^{\hat v^*_{2,\alpha}}_{\alpha, 1})  \\ 
& = & 
\langle b(x, \hat v^*_{1,\alpha}(\theta, x), \hat v^*_{2, \alpha}(\theta, x)), \nabla_x 
\psi^{ \hat v^*_{2,\alpha}}_{\alpha, 1}\rangle 
+ \theta r_1(x, \hat v^*_{1,\alpha}(\theta, x),\hat v^*_{2, \alpha}(\theta, x)) \psi^{v^*_{2,\alpha}}_{\alpha, 1} \Big] \\ 
&& + \frac{1}{2} \ {\rm trace}(a(x) \nabla^2_x \psi^{ \hat v^*_{2,\alpha}}_{\alpha, 1}) , \\
\psi^{\hat v^*_{2,\alpha}}_{\alpha, 1} (0, x) & = & 1, \theta \in (0, \ \Theta) , \ x \in \mathbb{R}^d , \\
\alpha \theta \frac{\partial \psi^{\hat v^*_{1, \alpha}}_{\alpha, 2}}{\partial \theta} & = & 
\inf_{v_2 \in V_2} \Big[ \langle b(x, \hat v^*_{1}(\theta, x), v_2), \nabla_x \psi^{\hat v^*_{1, \alpha}}_{\alpha, 2} \rangle 
+ \theta r_2(x, \hat v^*_{1, \alpha}(\theta, x), v_2) \psi^{\hat v^*_{1, \alpha}}_{\alpha, 2} \Big]  \\ 
&& + \frac{1}{2} \ {\rm trace}(a(x) \nabla^2_x \psi^{\hat v^*_{1, \alpha}}_{\alpha, 2})  \\ 
& = & 
\langle b(x, \hat v^*_{1,\alpha}(\theta,  x),\hat v^*_{2,\alpha}(\theta, x)), \nabla_x 
\psi^{\hat v^*_{1, \alpha}}_{\alpha, 2} \rangle 
+ \theta r_2(x, \hat v^*_{1,\alpha}\theta, (x), \hat v^*_{2,\alpha}(\theta, x) \psi^{v^*_{1, \alpha}}_{\alpha, 2} \Big]  \\ 
&& + \frac{1}{2} \ {\rm trace}(a(x) \nabla^2_x \psi^{\hat v^*_{1, \alpha}}_{\alpha, 2}) , \\ 
\psi^{\hat v^*_{1, \alpha}}_{\alpha, 2} (0, x) & = & 1, \theta \in (0, \ \Theta) , \ x \in \mathbb{R}^d .
\end{eqnarray*}
Moreover, the following estimates holds:
\interdisplaylinepenalty=0
\begin{eqnarray}\label{esimateuniform1}
\frac{1}{\psi^{ \hat v^*_{2,\alpha}}_{\alpha, 1}(\theta, x)} 
\frac{\partial \psi^{ \hat v^*_{2, \alpha}}_{\alpha, 1}(\theta, x)}{\partial \theta} & \leq & 
\frac{\theta \|r_1\|_{\infty} }{\alpha}, \\ \nonumber
\frac{1}{\psi^{ \hat v^*_{1,\alpha}}_{\alpha, 2}(\theta, x)} 
\frac{\partial \psi^{ \hat v^*_{1,\alpha}}_{\alpha, 2}(\theta, x)}{\partial \theta} & \leq & 
\frac{\theta \|r_2\|_{\infty} }{\alpha},
\end{eqnarray}
for all $ \alpha > 0, \theta \in (0, \ \Theta),  x \in \mathbb{R}^d.$
\end{theorem}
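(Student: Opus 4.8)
The plan is to establish the existence part via the same fixed-point machinery already developed for Theorem \ref{Nashequilibriumdiscounted}, and then to derive the two quotient estimates by differentiating the stochastic representation of the value functions in $\theta$. For the existence of $(\hat v^*_{1,\alpha}, \hat v^*_{2,\alpha})$, I would observe that the coupled system displayed in the statement is verbatim the system \eqref{coupleddiscountedhjb} whose solvability was already obtained by applying Fan's fixed point theorem to the best-response correspondence $H(\hat v_1,\hat v_2) = H_1(\hat v_2) \times H_2(\hat v_1)$ of \eqref{optimalresponses}. Thus I would simply invoke Lemma \ref{usc1} (which gives upper semi-continuity of $H$ under (A1) and (A2)) together with Fan's theorem to produce a fixed point $(\hat v^*_{1,\alpha}, \hat v^*_{2,\alpha}) \in H(\hat v^*_{1,\alpha}, \hat v^*_{2,\alpha})$. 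Unwinding the definitions of $H_1$ and $H_2$ and using the characterization of $\psi^{\hat v_1}_{\alpha,2}$ and $\psi^{\hat v_2}_{\alpha,1}$ from Theorem \ref{pdeexistence} then yields precisely the asserted coupled HJB system with the minimizing selectors realized by $\hat v^*_{1,\alpha}$ and $\hat v^*_{2,\alpha}$.

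The genuinely new content is the pair of estimates \eqref{esimateuniform1}, and here the plan is to work directly from the representation formula of Theorem \ref{pdeexistence},
\[
\psi^{\hat v^*_{2,\alpha}}_{\alpha,1}(\theta,x) = E^{\hat v^*_{1,\alpha}, \hat v^*_{2,\alpha}}_x\Big[ e^{\theta \int_0^\infty e^{-\alpha t} r_1(X(t), \hat v^*_{1,\alpha}(\theta e^{-\alpha t}, X(t)), \hat v^*_{2,\alpha}(\theta e^{-\alpha t}, X(t)))\, dt} \Big],
\]
and analogously for $\psi^{\hat v^*_{1,\alpha}}_{\alpha,2}$. I would differentiate in $\theta$, using the bounds on $\partial \psi / \partial \theta$ from Lemma \ref{lemma-estimates1}(i) to justify interchanging differentiation and expectation. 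The dependence of the integrand on $\theta$ enters in two ways: through the explicit factor $\theta$ in the exponent and through the strategies evaluated at $\theta e^{-\alpha t}$. The key point is that at the fixed point the strategies $\hat v^*_{i,\alpha}$ are the pointwise minimizing selectors, so that the contribution from differentiating them through the argument $\theta e^{-\alpha t}$ is nonpositive by the envelope principle. This is the analogue of the argument establishing \eqref{eq3section3} in the proof of Theorem \ref{pdeexistence}, following [\cite{AnupBorkarSuresh}, Theorem 3.1].

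After discarding the nonpositive selector-variation term, the remaining bound is obtained by noting that $\frac{\partial}{\partial \theta}$ of the exponent, coming from the explicit $\theta$-factor, equals $\int_0^\infty e^{-\alpha t} r_1(\cdots)\, dt \le \frac{\|r_1\|_\infty}{\alpha}$ pathwise, since $\int_0^\infty e^{-\alpha t}\, dt = 1/\alpha$. Hence
\[
\frac{\partial \psi^{\hat v^*_{2,\alpha}}_{\alpha,1}}{\partial \theta}(\theta,x) \le \frac{\|r_1\|_\infty}{\alpha}\, E^{\hat v^*_{1,\alpha},\hat v^*_{2,\alpha}}_x\Big[ e^{\theta \int_0^\infty e^{-\alpha t} r_1(\cdots)\, dt}\Big] = \frac{\|r_1\|_\infty}{\alpha}\, \psi^{\hat v^*_{2,\alpha}}_{\alpha,1}(\theta,x),
\]
and dividing through by the (strictly positive) value function gives the first inequality in \eqref{esimateuniform1}; the second follows symmetrically. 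I anticipate that the main obstacle is the rigorous justification of differentiating under the expectation and, relatedly, making the envelope argument precise at the level of the weak (Sobolev) solution rather than a classical one — this is why I would lean on the uniform derivative bound of Lemma \ref{lemma-estimates1}(i) and the optimality of the selectors at the fixed point rather than attempting a naive pathwise differentiation. Note that the stated bound carries $\theta$ on the right-hand side, which would require the sharper pathwise estimate $\int_0^\infty \theta e^{-\alpha t} r_1 \le \theta \|r_1\|_\infty / \alpha$ to be tracked through the logarithmic derivative; I would verify that the factor $\theta$ there is consistent with differentiating the exponent written as $\theta \int_0^\infty e^{-\alpha t} r_1\, dt$.
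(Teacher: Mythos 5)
Your proposal is correct and follows essentially the same route as the paper: the existence part is, exactly as you say, a direct invocation of Theorem \ref{Nashequilibriumdiscounted} (Fan's fixed point applied to $H$ via Lemma \ref{usc1}) combined with the representation from Theorem \ref{pdeexistence}, and the paper derives the estimates \eqref{esimateuniform1} precisely as you propose, by differentiating that stochastic representation in $\theta$ and invoking the envelope theorem of Milgrom and Segal \cite{MS} at a minimizer $v^*_1$ so that only the explicit $\theta$-dependence of the exponent survives, bounded pathwise via $\int_0^\infty e^{-\alpha t}\,dt = 1/\alpha$. On the bookkeeping point you rightly flag: the paper's displayed derivative carries the extra factor $\theta$ inside the expectation (yielding the right-hand side $\theta\|r_1\|_\infty/\alpha$ rather than the $\|r_1\|_\infty/\alpha$ your elementary differentiation of $e^{\theta\int_0^\infty e^{-\alpha t} r_1\,dt}$ produces), and either version is adequate downstream, since the sequel only uses that $\eta_\alpha = \alpha\theta\,\psi^{-1}\partial_\theta\psi$ is bounded uniformly in $\alpha$.
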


\begin{proof} The first part of the proof follows from Theorem \ref{Nashequilibriumdiscounted}.
Using Theorem \ref{pdeexistence} (ii), we have
\[
\psi^{\hat v^*_{2,\alpha}}_{\alpha, 1}(\theta, x) \ = \ \inf_{v_1 \in {\mathcal M}_1} E^{v_1, \hat v^*_{2, \alpha}}_x 
\Big[ e^{\theta \int^\infty_0 e^{-\alpha t} r_1(X(t), v_1(t, X(t)), \hat v^*_{2, \alpha}(\theta e^{-\alpha t}, X(t))) dt}
\Big].
\]
Now using the envelope theorem, see \cite{MS},
we have 
\interdisplaylinepenalty=0
\begin{eqnarray*}
\frac{\psi^{ \hat v^*_{2,\alpha}}_{\alpha, 1}(\theta, x)}{\partial \theta} & = & 
 E^{v^*_1, \hat v^*_{2, \alpha}}_x 
\Big[ \theta\int^\infty_0 e^{-\alpha t} r_1(X(t), v^*_1(t, X(t)), \hat v^*_{2, \alpha}(\theta e^{-\alpha t}, X(t))) dt \\
&& \times e^{\theta \int^\infty_0 e^{-\alpha t} r_1(X(t), v^*_1(t, X(t)),   \hat v^*_{2, \alpha}
(\theta e^{-\alpha t}, X(t))) dt} \Big] \\
& \leq & \frac{\theta \|r_1\|_{\infty} }{\alpha} \psi^{\hat v^*_{2,\alpha}}_{\alpha, 1}(\theta, x),
\end{eqnarray*}
where $v^*_1$ is a minimizer for $ J^{v_1, v^*_{2, \alpha}}(\theta, x)$ over $v_1 \in {\mathcal M}_1$.
This completes the proof of first estimate. The proof of the second estimate is symmetric.
\end{proof}

\vspace{.1in}
Fix $x_0 \in C_0$ and set
\interdisplaylinepenalty=0
\begin{equation}\label{barpsi}
\Bar{\psi}^{\hat v^*_{2,\alpha}}_{\alpha, 1}(\theta, x) \ = \ 
\frac{\psi^{\hat v^*_{2,\alpha}}_{\alpha, 1}(\theta, x)}{\psi^{ \hat v^*_{2,\alpha}}_{\alpha, 1}(\theta, x_0)}, \ 
\Bar{\psi}^{ \hat v^*_{1,\alpha}}_{\alpha, 2}(\theta, x) \ = \ 
\frac{\psi^{\hat v^*_{1,\alpha}}_{\alpha, 2}(\theta, x)}{\psi^{ \hat v^*_{1,\alpha}}_{\alpha, 2}(\theta, x_0)}, x \in 
\mathbb{R}^d.
\end{equation}
\begin{lemma}\label{lemmabound} Assume (A1), (A3) and (A4). Then
\begin{eqnarray*}
\Bar{\psi}^{\hat v^*_{2,\alpha}}_{\alpha, 1}(\theta, x) &\leq & W(x) \\
\Bar{\psi}^{\hat v^*_{1,\alpha}}_{\alpha, 2}(\theta, x) &\leq & W(x), \ x \in \mathbb{R}^d.
\end{eqnarray*}
\end{lemma}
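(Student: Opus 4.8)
The plan is to read off a local martingale from the Hamilton--Jacobi--Bellman equation of Theorem~\ref{thm4.1} and then control the resulting stopped expectation by the Lyapunov estimate of Lemma~\ref{expobound}. Write $\psi:=\psi^{\hat v^*_{2,\alpha}}_{\alpha,1}$ for brevity, fix $r>0$ small enough that $B(x_0,r)\subseteq C_0$, and set $\tau:=\tau^r_{x_0}$ and $\tau_R:=\inf\{t\ge0:\|X(t)\|\ge R\}$. Under the equilibrium pair $(\hat v^*_{1,\alpha},\hat v^*_{2,\alpha})$ the function $\psi$ solves its HJB equation with the infimum attained (the second equality in Theorem~\ref{thm4.1}), so I would apply the generalized It\^o--Krylov formula, legitimate since $\psi\in W^{1,2,p}_{\rm loc}$ with $p\ge d+1$, to
\[
Y(t)=\exp\Big(\theta\int_0^t e^{-\alpha s} r_1(X(s),\hat v^*_{1,\alpha}(\theta e^{-\alpha s},X(s)),\hat v^*_{2,\alpha}(\theta e^{-\alpha s},X(s)))\,ds\Big)\,\psi(\theta e^{-\alpha t},X(t)).
\]
The chain rule applied to $\theta(t)=\theta e^{-\alpha t}$ produces the term $-\alpha\theta\,\partial_\theta\psi$, which is exactly the left-hand side of the equation, so the $dt$-drift of $Y$ cancels and $Y$ is a local martingale.

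First I would check that $Y$ is bounded on $[0,\tau\wedge\tau_R]$: the exponential weight is at most $e^{\theta\|r_1\|_\infty/\alpha}\le e^{\delta/\alpha}$ by (A4), and $\psi\le e^{\delta/\alpha}$ on $(0,\Theta)\times\overline B_R$ by Lemma~\ref{lemma-estimates1}(i) together with (A4). Hence $Y$ is a genuine bounded martingale up to the a.s.\ finite time $\tau\wedge\tau_R$, so optional stopping yields $\psi(\theta,x)=E^{\hat v^*_{1,\alpha},\hat v^*_{2,\alpha}}_x[Y(\tau\wedge\tau_R)]$. Letting $R\to\infty$ and invoking $\tau<\infty$ a.s.\ (which follows from $E_x[e^{\delta\tau}]\le W(x)<\infty$ in Lemma~\ref{expobound}), bounded convergence gives $\psi(\theta,x)=E_x[Y(\tau)]$.

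Next I would estimate $Y(\tau)$ from above. For the running cost, (A4) gives $\theta\int_0^\tau e^{-\alpha s}r_1\,ds\le\theta\|r_1\|_\infty\int_0^\tau e^{-\alpha s}\,ds\le\theta\|r_1\|_\infty\tau\le\delta\tau$, so the weight is at most $e^{\delta\tau}$. For the terminal factor, $X(\tau)\in\partial B(x_0,r)$, and since $\psi(\cdot,y)$ is nondecreasing (its $\theta$-derivative is nonnegative by the stochastic representation, as recorded in the proof of Theorem~\ref{thm4.1}), $\psi(\theta e^{-\alpha\tau},X(\tau))\le\psi(\theta,X(\tau))\le\sup_{y\in\partial B(x_0,r)}\psi(\theta,y)$. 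Pulling out this deterministic supremum and using Lemma~\ref{expobound},
\[
\psi(\theta,x)\le\Big(\sup_{y\in\partial B(x_0,r)}\psi(\theta,y)\Big)\,E^{\hat v^*_{1,\alpha},\hat v^*_{2,\alpha}}_x[e^{\delta\tau}]\le\Big(\sup_{y\in\partial B(x_0,r)}\psi(\theta,y)\Big)W(x).
\]
Letting $r\downarrow0$ and using the continuity of $\psi(\theta,\cdot)$ (Theorem~\ref{pdeexistence} gives $\psi\in C_b$ and $W^{2,p}_{\rm loc}$ with $p\ge d+1$, hence continuity in $x$), the supremum tends to $\psi(\theta,x_0)$, so $\psi^{\hat v^*_{2,\alpha}}_{\alpha,1}(\theta,x)\le W(x)\,\psi^{\hat v^*_{2,\alpha}}_{\alpha,1}(\theta,x_0)$, i.e.\ $\Bar\psi^{\hat v^*_{2,\alpha}}_{\alpha,1}(\theta,x)\le W(x)$. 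The bound for $\Bar\psi^{\hat v^*_{1,\alpha}}_{\alpha,2}$ follows by the symmetric argument with the players interchanged.

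I expect the main obstacle to be the rigorous drift cancellation for $Y$ under only Sobolev regularity, which needs the It\^o--Krylov formula with localization at $\tau_R$ and the passage $R\to\infty$, together with the nonnegativity of $\partial_\theta\psi$ (slightly delicate because the parameter $\theta$ enters the cost also through the fixed opponent strategy $\hat v^*_{2,\alpha}(\theta e^{-\alpha s},\cdot)$). Once these are in place, the remainder is essentially bookkeeping, the crux being the exact matching of the small-cost bound (A4), which produces precisely $e^{\delta\tau}$, with Lemma~\ref{expobound}, which produces precisely the factor $W(x)$.
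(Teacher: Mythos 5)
Your proof is correct and follows essentially the same route as the paper's: apply the It\^o--Dynkin formula to the cost-weighted process up to $\tau^r_{x_0}\wedge\tau_R$, use the monotonicity of $\psi^{\hat v^*_{2,\alpha}}_{\alpha,1}$ in $\theta$, invoke (A4) to dominate the accumulated exponent by $e^{\delta\tau^r_{x_0}}$, apply Lemma~\ref{expobound} to produce the factor $W(x)$, and let $r\downarrow 0$. The only, and immaterial, difference is that you run the argument under the equilibrium pair and obtain an exact (local) martingale identity, whereas the paper runs it with an arbitrary $v_1\in{\mathcal M}_1$ against $\hat v^*_{2,\alpha}$ and only uses the resulting one-sided (submartingale) inequality.
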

\begin{proof} Using It$\hat{\rm o}$-Dynkin formula it is easy to see that for each $v_1 \in {\mathcal M}_1$, and $r > 0$
small enough, we have
\interdisplaylinepenalty=0
\begin{eqnarray*}
\psi^{ \hat v^*_{2,\alpha}}_{\alpha, 1}(\theta, x)& \leq & E^{v_1, \hat v^*_{2, \alpha}}_x \Big[
e^{\theta  \int^{\tau^r_{x_0}}_0 e^{-\alpha t} r_1(X(t), v_1(t, X(t)), \hat v^*_{2, \alpha}(\theta e^{-\alpha t},X(t))) dt} 
\psi^{ \hat v^*_{2,\alpha}}_{\alpha, 1}(\theta e^{-\alpha \tau^r_{x_0}}, X(\tau^r_{x_0})) \Big] \\
& \leq & \sup_{y \in \partial B(x_0, r)} \psi^{\hat v^*_{2,\alpha}}_{\alpha, 1}(\theta, y) \, 
E^{v_1, \hat v^*_{2, \alpha}}_x \Big[e^{\theta  \int^{\tau^r_{x_0}}_0 
e^{-\alpha t} r_1(X(t), v_1(t, X(t)), \hat v^*_{2, \alpha}(\theta e^{-\alpha t},X(t))) dt} \Big] \\
& \leq & \sup_{y \in \partial B(x_0, r)} \psi^{ \hat v^*_{2,\alpha}}_{\alpha, 1}(\theta, y)
E^{v_1, \hat v^*_{2, \alpha}}_x \Big[ e^{\delta \tau^r_{x_0}} \Big] .
\end{eqnarray*}
The second inequality follows from the fact that 
$ \theta \mapsto \psi^{ v^*_{2,\alpha}}_{\alpha, 1}(\theta, x)$ is non-decreasing in $\theta$ for each fixed $x$. Now using Lemma \ref{expobound}, we obtain
\begin{eqnarray*}
\psi^{ \hat v^*_{2,\alpha}}_{\alpha, 1}(\theta, x)& \leq &  \sup_{y \in \partial B(x_0, r)} \psi^{ \hat v^*_{2,\alpha}}_{\alpha, 1}(\theta, y) W(x).
\end{eqnarray*}
Now by letting $r \downarrow 0$, we obtain
\[
\psi^{\hat v^*_{2,\alpha}}_{\alpha, 1}(\theta, x) \leq \psi^{ \hat v^*_{2,\alpha}}_{\alpha, 1}(\theta, x_0) W(x).
\]
Hence the first estimate follows. The proof of the second estimate is similar.
\end{proof}
\vspace{.1in}
\begin{lemma} Assume (A1), (A3) and (A4). Then we have the following 
\begin{eqnarray*}
\Bar{\psi}^{\hat v^*_{2,\alpha}}_{\alpha, 1}(\theta, x) &\geq & \frac{1}{W(x_0)} , \\
\Bar{\psi}^{\hat v^*_{1,\alpha}}_{\alpha, 2}(\theta, x) &\geq & \frac{1}{W(x_0)}, \ x \in C_0.
\end{eqnarray*}
\end{lemma}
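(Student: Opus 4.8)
The plan is to run the proof of Lemma~\ref{lemmabound} in reverse, interchanging the roles of the reference point $x_0$ and the running point $x$ so that the exponential hitting-time estimate of Lemma~\ref{expobound} applies with target $x$. Recalling the definition (\ref{barpsi}), the first asserted inequality is equivalent to
\[
\psi^{\hat v^*_{2,\alpha}}_{\alpha, 1}(\theta, x_0) \ \leq \ W(x_0)\, \psi^{\hat v^*_{2,\alpha}}_{\alpha, 1}(\theta, x), \ x \in C_0 ,
\]
so I would aim at an upper bound for the value at $x_0$ in terms of the value at $x$.

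First I would fix $x \in C_0$ and choose $r > 0$ small enough that $B(x,r) \subseteq C_0$, which is possible since $C_0$ is the complement of a compact set and hence open. Starting the process at $x_0$ and letting $\tau^r_x$ be the first hitting time of $B(x,r)$, the dynamic programming inequality for the control problem defining $\psi^{\hat v^*_{2,\alpha}}_{\alpha,1}$ --- equivalently, the fact that $e^{\theta\int_0^t e^{-\alpha s} r_1\, ds}\,\psi^{\hat v^*_{2,\alpha}}_{\alpha,1}(\theta e^{-\alpha t}, X(t))$ is a submartingale under any $v_1 \in {\mathcal M}_1$ when player $2$ uses $\hat v^*_{2,\alpha}$, which follows from the HJB equation in Theorem~\ref{thm4.1} just as in Lemma~\ref{appendixlemma-rep1} --- gives, for each $v_1 \in {\mathcal M}_1$,
\[
\psi^{\hat v^*_{2,\alpha}}_{\alpha, 1}(\theta, x_0) \ \leq \ E^{v_1, \hat v^*_{2, \alpha}}_{x_0}\Big[ e^{\theta\int_0^{\tau^r_x} e^{-\alpha t} r_1(X(t), v_1(t,X(t)), \hat v^*_{2,\alpha}(\theta e^{-\alpha t}, X(t)))\, dt}\, \psi^{\hat v^*_{2,\alpha}}_{\alpha, 1}(\theta e^{-\alpha \tau^r_x}, X(\tau^r_x)) \Big].
\]
I would then dominate the right-hand side term by term. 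Since $\theta \mapsto \psi^{\hat v^*_{2,\alpha}}_{\alpha,1}(\theta,\cdot)$ is non-decreasing (as already used in Lemma~\ref{lemmabound}) and $X(\tau^r_x) \in \partial B(x,r)$, the last factor is at most $\sup_{y \in \partial B(x,r)} \psi^{\hat v^*_{2,\alpha}}_{\alpha,1}(\theta,y)$; and by the small-cost condition (A4), $\theta e^{-\alpha t} r_1 \leq \theta\|r_1\|_{\infty} \leq \delta$, so the exponential factor is bounded by $e^{\delta \tau^r_x}$. Applying Lemma~\ref{expobound} with starting point $x_0$ and target $y = x \in C_0$ yields $E^{v_1, \hat v^*_{2,\alpha}}_{x_0}[e^{\delta \tau^r_x}] \leq W(x_0)$, whence
\[
\psi^{\hat v^*_{2,\alpha}}_{\alpha, 1}(\theta, x_0) \ \leq \ W(x_0)\, \sup_{y \in \partial B(x, r)} \psi^{\hat v^*_{2,\alpha}}_{\alpha, 1}(\theta, y) .
\]
Letting $r \downarrow 0$ and invoking continuity of $\psi^{\hat v^*_{2,\alpha}}_{\alpha,1}(\theta,\cdot)$ (Theorem~\ref{pdeexistence}) gives the displayed inequality, and dividing by $\psi^{\hat v^*_{2,\alpha}}_{\alpha,1}(\theta, x_0)$ produces the first estimate.

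The argument is essentially routine once Lemmas~\ref{expobound} and~\ref{lemmabound} are in hand; the one point that genuinely requires attention is the restriction $x \in C_0$, which is exactly what places the target ball $B(x,r)$ inside $C_0$ and thereby makes the exponential-moment bound of Lemma~\ref{expobound} (and with it the Lyapunov condition (A3)) available. I would also note that, because $\psi^{\hat v^*_{2,\alpha}}_{\alpha,1}$ is an infimum, the inequality holds for \emph{any} admissible $v_1$, so no optimal selector is needed. The second estimate follows by the symmetric argument with $(\hat v^*_{1,\alpha}, \psi^{\hat v^*_{1,\alpha}}_{\alpha,2}, r_2)$ in place of $(\hat v^*_{2,\alpha}, \psi^{\hat v^*_{2,\alpha}}_{\alpha,1}, r_1)$.
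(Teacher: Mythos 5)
Your proposal is correct and follows essentially the same route as the paper's own proof: start the process at $x_0$, stop at the first hitting time of $B(x,r) \subseteq C_0$, bound the terminal factor by the boundary supremum using monotonicity in $\theta$, bound the exponential cost factor by $e^{\delta \tau^r_x}$ via (A4), apply Lemma~\ref{expobound} to get the $W(x_0)$ bound, and let $r \downarrow 0$. The only cosmetic difference is that you invoke the submartingale/dynamic-programming formulation explicitly where the paper simply cites the It\^{o}--Dynkin formula.
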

\begin{proof} For $x \in C_0$, using It$\hat{\rm o}$-Dynkin formula, it follows that for each $v_1 \in {\mathcal M}_1$, and $r > 0$
small enough
\interdisplaylinepenalty=0
\begin{eqnarray*}
\psi^{ \hat v^*_{2,\alpha}}_{\alpha, 1}(\theta, x_0)& \leq & E^{v_1, \hat v^*_{2, \alpha}}_{x_0} \Big[
e^{\theta  \int^{\tau^r_{x}}_0 e^{-\alpha t} r_1(X(t), v_1(t, X(t)), \hat v^*_{2, \alpha}(\theta e^{-\alpha t},X(t))) dt} 
\psi^{ \hat v^*_{2,\alpha}}_{\alpha, 1}(\theta e^{-\alpha \tau^r_{x}}, X(\tau^r_{x})) \Big] \\
& \leq & \sup_{y \in \partial B(x, r)} \psi^{\hat v^*_{2,\alpha}}_{\alpha, 1}(\theta, y) \, 
E^{v_1, \hat v^*_{2, \alpha}}_{x_0} \Big[e^{\theta  \int^{\tau^r_{x}}_0 
e^{-\alpha t} r_1(X(t), v_1(t, X(t)), \hat v^*_{2, \alpha}(\theta e^{-\alpha t},X(t))) dt} \Big].
\end{eqnarray*} 
Hence
\interdisplaylinepenalty=0
\begin{eqnarray*}
\frac{\psi^{ \hat v^*_{2,\alpha}}_{\alpha, 1}(\theta, x_0)}{\displaystyle{\sup_{y \in \partial B(x, r)}
 \psi^{\hat v^*_{2,\alpha}}_{\alpha, 1}(\theta, y)}} & \leq & E^{v_1, \hat v^*_{2, \alpha}}_{x_0} 
\Big[e^{\theta  \int^{\tau^r_{x}}_0 e^{-\alpha t} r_1(X(t), v_1(t, X(t)), \hat v^*_{2, \alpha}(\theta e^{-\alpha t},X(t))) dt} \Big]\\
& \leq & E^{v_1, \hat v^*_{2, \alpha}}_{x_0} [e^{\theta \|r_1\|_{\infty} \tau^r_x}] \\
& \leq & W(x_0).
\end{eqnarray*}
The last inequality follows  using Lemma \ref{expobound} and the fact that $x \in C_0$. Now by letting $r \downarrow 0$,
the first estimate follows. The proof of the second lower bound follows by a symmetric argument.
\end{proof}
\vspace{.1in}
\begin{theorem} Assume (A1)-(A4). For each $\theta \in (0, \Theta)$, there exists $(v^*_{1}, v^*_{2}) \in 
{\mathcal S}_1 \times  {\mathcal S}_2$ and 
$(\rho_1, \psi_1) , (\rho_2, \psi_2) 
\in \mathbb{R} \times W^{2, p}_{loc}(\mathbb{R}^d) \cap O(W)$ satisfying
\interdisplaylinepenalty=0
\begin{eqnarray}\label{coupledergodichjb}
 \theta \rho_1  \psi_1 & = & 
\inf_{v_1 \in V_1} \Big[ \langle b(x, v_1, v^*_{2}( x)), 
\nabla_x \psi_1 \rangle  + \theta r_1(x, v_1, v^*_{2}( x)) \psi_ 1 \Big] \nonumber \\ \nonumber
&& + \frac{1}{2} \ {\rm trace}(a(x) \nabla^2_x \psi_1)  \\ \nonumber 
& = & 
\langle b(x, v^*_1( x), v^*_{2}( x)), \nabla_x  \psi_1\rangle 
+ \theta r_1(x, v^*_{1}( x), v^*_{2}(x)) \psi_1 \Big] \\ \nonumber 
&& + \frac{1}{2} \ {\rm trace}(a(x) \nabla^2_x \psi_ 1) , \\ \nonumber 
\psi_1( x_0) & = & 1,  \ x \in \mathbb{R}^d, \\
\theta \rho_2  \psi_2 & = & 
\inf_{v_2 \in V_2} \Big[ \langle b(x, v^*_1(x), v_{2}), \nabla_x \psi_2 \rangle 
+ \theta r_2(x, v^*_1(x), v_{2}) \psi_ 2 \Big] \\ \nonumber 
&& + \frac{1}{2} \ {\rm trace}(a(x) \nabla^2_x \psi_2)  \\ \nonumber 
& = & 
\langle b(x, v^*_1( x), v^*_{2}( x)), \nabla_x  \psi_2\rangle 
+ \theta r_2(x, v^*_{1}( x), v^*_{2}(x)) \psi_1 \Big] \\ \nonumber
&& + \frac{1}{2} \ {\rm trace}(a(x) \nabla^2_x \psi_ 2) , \\ \nonumber 
\psi_2( x_0) & = & 1, \ x \in \mathbb{R}^d.
\end{eqnarray}
\end{theorem}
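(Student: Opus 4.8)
The plan is to obtain \eqref{coupledergodichjb} as the vanishing-discount limit $\alpha\downarrow 0$ of the coupled \emph{discounted} system of Theorem~\ref{thm4.1}, using the normalized value functions of \eqref{barpsi}. Fix $\theta\in(0,\Theta)$. For each $\alpha>0$ let $(\hat v^*_{1,\alpha},\hat v^*_{2,\alpha})$ be the discounted Nash pair of Theorem~\ref{thm4.1}, let $\bar\psi^{\hat v^*_{2,\alpha}}_{\alpha,1},\ \bar\psi^{\hat v^*_{1,\alpha}}_{\alpha,2}$ be as in \eqref{barpsi}, and set
\[
\rho_{1,\alpha}=\frac{\alpha}{\psi^{\hat v^*_{2,\alpha}}_{\alpha,1}(\theta,x_0)}\,\frac{\partial\psi^{\hat v^*_{2,\alpha}}_{\alpha,1}(\theta,x_0)}{\partial\theta},\qquad \rho_{2,\alpha}=\frac{\alpha}{\psi^{\hat v^*_{1,\alpha}}_{\alpha,2}(\theta,x_0)}\,\frac{\partial\psi^{\hat v^*_{1,\alpha}}_{\alpha,2}(\theta,x_0)}{\partial\theta}.
\]
Dividing the first equation of Theorem~\ref{thm4.1} by the $x$-independent factor $\psi^{\hat v^*_{2,\alpha}}_{\alpha,1}(\theta,x_0)$ (and the second by $\psi^{\hat v^*_{1,\alpha}}_{\alpha,2}(\theta,x_0)$) removes the $\partial_\theta$ term and leaves the purely spatial problem
\[
\theta\,\Phi_{1,\alpha}\,\bar\psi^{\hat v^*_{2,\alpha}}_{\alpha,1}=\inf_{v_1\in V_1}\Big[\langle b(x,v_1,\hat v^*_{2,\alpha}(\theta,x)),\nabla_x\bar\psi^{\hat v^*_{2,\alpha}}_{\alpha,1}\rangle+\theta r_1\,\bar\psi^{\hat v^*_{2,\alpha}}_{\alpha,1}\Big]+\tfrac12\,{\rm trace}(a(x)\nabla^2_x\bar\psi^{\hat v^*_{2,\alpha}}_{\alpha,1}),
\]
where $\Phi_{1,\alpha}(\theta,x)=\alpha\,\partial_\theta\log\psi^{\hat v^*_{2,\alpha}}_{\alpha,1}(\theta,x)$, so that $\Phi_{1,\alpha}(\theta,x_0)=\rho_{1,\alpha}$; analogously for the index-$2$ component with $\Phi_{2,\alpha}$.

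First I would assemble the uniform-in-$\alpha$ bounds. By the estimate \eqref{esimateuniform1} and the monotonicity of $\theta\mapsto\psi$, the factors $\Phi_{i,\alpha}$, and in particular $\rho_{i,\alpha}$, lie in $[0,\theta\|r_i\|_\infty]$. By Lemma~\ref{lemmabound} and the subsequent lower-bound lemma, $W(x_0)^{-1}\le\bar\psi^{\hat v^*_{2,\alpha}}_{\alpha,1}\le W$ on $C_0$ and $\bar\psi\le W$ on $\mathbb R^d$, so the normalized functions are bounded on compacts uniformly in $\alpha$. Inserting these bounds in the right-hand sides above and applying the interior $W^{2,p}$ estimates used in [\cite{Borkar}, p.158] gives $\sup_\alpha\|\bar\psi^{\hat v^*_{2,\alpha}}_{\alpha,1}\|_{2,p;B_R}<\infty$ for each $R$, and likewise for the other component. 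Via the compact embedding [\cite{Adams}, Chapter~6], Banach--Alaoglu and a diagonal argument I would then pass to $\alpha_n\downarrow0$ along which $\bar\psi^{\hat v^*_{2,\alpha_n}}_{\alpha_n,1}\to\psi_1$ and $\bar\psi^{\hat v^*_{1,\alpha_n}}_{\alpha_n,2}\to\psi_2$ in $C^{1}_{\rm loc}$ and weakly in $W^{2,p}_{\rm loc}$, $\rho_{i,\alpha_n}\to\rho_i\in[0,\theta\|r_i\|_\infty]$, and, by compactness of $\mathcal S_i$, the frozen strategies $\hat v^*_{i,\alpha_n}(\theta,\cdot)\to v^*_i\in\mathcal S_i$.

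Next I would pass to the limit in the two spatial equations. On the right-hand sides this is precisely the mechanism of Lemma~\ref{usc1}: by (A2) the drift and running cost are affine in the control, so the convergence $\hat v^*_{i,\alpha_n}(\theta,\cdot)\to v^*_i$ combined with the strong $C^{1}_{\rm loc}$ convergence of the normalized functions (and a Banach--Saks passage as there) identifies the limit of the Hamiltonian with $\inf_{v_1\in V_1}[\langle b(x,v_1,v^*_2(x)),\nabla_x\psi_1\rangle+\theta r_1(x,v_1,v^*_2(x))\psi_1]$, the infimum being attained at $v^*_1$ through a measurable selector \cite{Benes}; this yields both equalities of the first line of \eqref{coupledergodichjb}, and symmetrically the second. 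Since each $\Phi_{i,\alpha}$ is bounded it has a weak-$*$ limit $\Phi_i\in L^\infty_{\rm loc}$, and as $\psi_i>0$ (Harnack inequality) the limiting identities hold with the possibly $x$-dependent factor $\Phi_i$ in place of the constant $\rho_i$.

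The main obstacle is to upgrade $\Phi_i$ to the spatial constant $\rho_i=\lim_n\rho_{i,\alpha_n}$; equivalently, to show that the oscillation $\Phi_{i,\alpha}(\theta,\cdot)-\Phi_{i,\alpha}(\theta,x_0)$ tends to $0$ as $\alpha\downarrow0$. This is exactly where the Lyapunov condition (A3) and the small-cost condition (A4) enter. Heuristically, writing $\alpha\log\psi^{\hat v^*_{2,\alpha}}_{\alpha,1}(\theta,x)=\alpha\log\bar\psi^{\hat v^*_{2,\alpha}}_{\alpha,1}(\theta,x)+\alpha\log\psi^{\hat v^*_{2,\alpha}}_{\alpha,1}(\theta,x_0)$ and using that $\bar\psi_\alpha$ is trapped between $W(x_0)^{-1}$ and $W$ on $C_0$, the first term is $o(1)$ on $C_0$, so the $x$-dependence of $\alpha\log\psi_\alpha$, and hence of its $\theta$-derivative $\Phi_{i,\alpha}$, collapses there. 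To make this rigorous I would use the stochastic representation of $\psi^{\hat v^*_{2,\alpha}}_{\alpha,1}$ from Theorem~\ref{pdeexistence} together with the exponential hitting-time estimate of Lemma~\ref{expobound} and the occupation-time estimate of Lemma~\ref{ergodicbound}, which under the tilted laws force the Abel-averaged cost to concentrate around an $x$-independent value; positivity of $\psi_i$ and the limiting equation then propagate constancy to all of $\mathbb R^d$. With $\rho_i\in\mathbb R$ in hand, \eqref{coupledergodichjb} holds, and the remaining claims are immediate: $\psi_i(x_0)=1$ since $\bar\psi_\alpha(\theta,x_0)\equiv1$; $\psi_i\le W$ gives $\psi_i\in O(W)$; the uniform local $W^{2,p}$ bounds give $\psi_i\in W^{2,p}_{\rm loc}(\mathbb R^d)$; and $(v^*_1,v^*_2)\in\mathcal S_1\times\mathcal S_2$ is the required stationary pair.
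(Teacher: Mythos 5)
Your proposal is correct and takes essentially the same route as the paper's proof: the vanishing-discount limit $\alpha\downarrow 0$ of the coupled discounted system of Theorem~\ref{thm4.1}, using the normalized functions \eqref{barpsi}, the uniform bounds of \eqref{esimateuniform1} and Lemma~\ref{lemmabound}, interior elliptic estimates as in [\cite{Borkar}, p.158], weak $W^{1,2,p}_{\rm loc}$/strong $C^{0,1}_{\rm loc}$ compactness with a diagonal argument, compactness of the strategy spaces, and an (A2)-based passage to the limit in the Hamiltonian. The one step you elaborate beyond the paper --- upgrading the weak-$*$ limit of $\Phi_{i,\alpha}=\alpha\,\partial_\theta\log\psi_{\alpha,i}$ to the spatial constant $\rho_i$ --- is exactly the step the paper outsources to ``closely mimicking the arguments in \cite{AnupBorkarSuresh}'', and your decomposition $\alpha\log\psi_\alpha=\alpha\log\bar\psi_\alpha+\alpha\log\psi_\alpha(\theta,x_0)$ with $\bar\psi_\alpha$ trapped by the Lyapunov bounds is precisely the mechanism used there (made rigorous by noting that $\alpha\log\bar\psi_\alpha\to0$ locally uniformly forces its $\theta$-derivative to vanish distributionally, which together with your uniform $L^\infty$ bound on $\Phi_{i,\alpha}$ identifies the weak-$*$ limit with $\lim_n\rho_{i,\alpha_n}$).
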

\begin{proof} From Theorem \ref{thm4.1}, it follows that $\Bar{\psi}^{\hat v^*_{2, \alpha}}_{\alpha, 1} \in 
W^{1, 2, p}_{{\rm loc}}((0, \Theta) \times \mathbb{R}^d), p \geq d+1, \alpha > 0$, satisfies the 
p.d.e.
\interdisplaylinepenalty=0
\begin{equation}\label{pdeauxillary1}
\eta_{\alpha}(\theta, x) \Bar{\psi}^{\hat v^*_{2, \alpha}}_{\alpha, 1} = \ 
\inf_{v_1 \in V_1} \Big[ {\mathcal L} \Bar{\psi}^{\hat v^*_{2, \alpha}}_{\alpha, 1}(x, v_1, \hat v^*_{2, \alpha}(\theta,x))
+ \theta r_1(x, v_1, \hat v^*_{2, \alpha}(x)) \Bar{\psi}^{\hat v^*_{2, \alpha}}_{\alpha, 1}\Big],
\end{equation}
where
\begin{equation}\label{eq1theorem4.2}
\eta_{\alpha}(\theta, x) = \alpha \theta \frac{1}{\psi^{\hat v^*_{2, \alpha}}_{\alpha, 1}}
\frac{\partial \psi^{\hat v^*_{2, \alpha}}_{\alpha, 1}}{\partial \theta}(\theta, x).
\end{equation}
From the estimates in Theorem \ref{thm4.1} and Lemma \ref{lemmabound}, it follows that the l.h.s. of (\ref{pdeauxillary1})
is locally uniformly bounded in $\alpha > 0$. Hence by freezing the l.h.s., using the arguments in
[\cite{Borkar}, p.158], it follows that 
\begin{equation}\label{estimate1theorem4.2}
\|\Bar{\psi}^{\hat v^*_{2, \alpha}}_{\alpha , 1} \|_{2, p; B_R} \ \leq \ K,
\end{equation}
where $K >0$ is a constant independent of $\alpha > 0$. 
Also note that 
\interdisplaylinepenalty=0
\begin{equation}\label{eq2theorem4.2}
\frac{\partial \Bar{\psi}^{\hat v^*_{2, \alpha}}_{\alpha, 1}}{\partial \theta}(\theta, x) \ = \ 
\Bar{\psi}^{\hat v^*_{2, \alpha}}_{\alpha, 1} (\theta, x)
\Big[ \frac{1}{\psi^{\hat v^*_{2, \alpha}}_{\alpha,1}(\theta, x)} 
\frac{\partial \psi^{\hat v^*_{2, \alpha}}_{\alpha, 1}}{\partial \theta}
 (\theta, x) - \frac{1}{\psi^{\hat v^*_{2, \alpha}}_{\alpha,1}(\theta, x_0)}
 \frac{\partial \psi^{\hat v^*_{2, \alpha}}_{\alpha, 1}}{\partial \theta} (\theta, x_0)\Big].
\end{equation}
Now combining the estimates in Theorem \ref{thm4.1}, Lemma \ref{lemmabound}, (\ref{estimate1theorem4.2}) and the
identity (\ref{eq2theorem4.2}), it follows that
\begin{equation}\label{estimate2theorem4.2}
\|\Bar{\psi}^{\hat v^*_{2, \alpha}}_{\alpha , 1} \|_{1,2, p; (0, \Theta) \times B_R} \ \leq \ K,
\end{equation}
where $K >0$ is a constant independent of $\alpha > 0$. 
Hence $\{\Bar{\psi}^{\hat v^*_{2, \alpha}}_{\alpha , 1} | \alpha > 0\}$ is  weakly compact in 
$W^{1,2,p}((0, \Theta) \times B_R), R > 0, p \geq d+1$.

Now by a diagonalization argument, there exists $\psi_1 \in W^{1,2,p}_{{\rm loc}}((0, \Theta) \times 
\mathbb{R}^d), p \geq d +1$ and a subsequence $\alpha_n \downarrow 0$ such that 
\begin{equation}\label{eq3theorem4.2}
\Bar{\psi}^{\hat v^*_{2, \alpha_n}}_{\alpha_n, 1} \to \psi_1 \ {\rm weakly\ in}\ 
W^{1,2,p}((0, \Theta) \times B_R), \ \forall \ R > 0.
\end{equation}
By closely mimicking the arguments  in \cite{AnupBorkarSuresh}, it follows that 
\begin{equation}\label{eq4theorem4.2}
\eta_{\alpha_n}\to \rho_1(\theta)  \ {\rm weakly\ in}\ 
L^{\infty} ((0, \Theta) \times \mathbb{R}^d). 
\end{equation}
Since $\hat{\mathcal S}_1 \times \hat{\mathcal S}_2$ is compact, it follows that there exists 
$(v^*_1, v^*_2) \in \hat{\mathcal S}_1 \times \hat{\mathcal S}_2$ such that 
along a subsequence (by an abuse of notation $\alpha_n$ itself) 
$(\hat v^*_{\alpha_n, 1}, \hat v^*_{\alpha_n, 2}) \to (v^*_1, v^*_2)$ in $\hat{\mathcal S}_1 \times \hat{\mathcal S}_2$. 
Now by letting $\alpha_n \downarrow 0$ in (\ref{pdeauxillary1}), it follows that $(\rho_1, \psi_1)
\in \mathbb{R} \times W^{1,2,p}_{{\rm loc}}((0, \Theta) \times \mathbb{R}^d)$ is a solution to 
the first part of the system of p.d.e.s in (\ref{coupledergodichjb}). Clearly $(v^*_1, v^*_2) \in {\mathcal S}_1 \times {\mathcal S}_2$. From  Lemma \ref{lemmabound}, it follows that
$\psi_1 \in O(W)$. The proof for the second part of the p.d.e. is similar. 
\end{proof}

\vspace{0.1in}

 To proceed further we  assume that:

\noindent {\bf (A5)} There exists $\beta > 1$ and an inf compact $h : \mathbb{R}^d \to [0, \ \infty)$
such that 
\[
{\mathcal L} W^\beta (x, v_1, v_2) \leq - h(x) + \hat{c} I_{\hat{C}}, \forall \ x\in \mathbb{R}^d,
v_1 \in V_1, v_2 \in V_2,
\]
where $\hat c >0$ and $\hat C$ is a compact subset of $ \mathbb{R}^d$.

It is not difficult to see that if $W$ is a polynomial in $x$, then (A3) implies (A5), in particular, if
$W(x) = x Q x^{\perp}$ for some positive definite matrix $Q$, then (A5) follows from (A3).

Finally we prove the following result.

\begin{theorem} Assume (A1)-(A5). The stationary Markov strategies $(v^*_1, v^*_2)$ given in
Theorem 4.6 is a Nash equilibrium and $(\rho_1, \rho_2)$ is the corresponding 
Nash values.
\end{theorem}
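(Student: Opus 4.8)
The plan is to treat the pair $(v^*_1,v^*_2)$ as inducing, for each player, a single-agent risk-sensitive ergodic control problem against the other player's fixed stationary strategy, and to run a verification argument off the coupled HJB system \eqref{coupledergodichjb}. Concretely, I would show that for every $v_1\in\mathcal M_1$ one has $\rho^{v_1,v^*_2}_1(\theta,x)\ge\rho_1$, with equality when $v_1=v^*_1$, and symmetrically that $\rho^{v^*_1,v_2}_2(\theta,x)\ge\rho_2$ for all $v_2\in\mathcal M_2$, with equality at $v_2=v^*_2$. Granting these, $\rho^{v^*_1,v^*_2}_1=\rho_1\le\rho^{v_1,v^*_2}_1$ and $\rho^{v^*_1,v^*_2}_2=\rho_2\le\rho^{v^*_1,v_2}_2$, which is exactly the Nash property, and $(\rho_1,\rho_2)$ are the corresponding Nash values. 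By symmetry it suffices to treat player $1$.

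The engine is an It\^o--Dynkin (Feynman--Kac) computation. Fix $v_1\in\mathcal M_1$, let $X$ be the process \eqref{statedynamics} driven by $(v_1,v^*_2)$ from $X(0)=x$, and apply the It\^o--Dynkin formula (after the usual mollification of $\psi_1$, as in Lemma \ref{appendixlemma-rep1}) to $Y(t):=\exp\!\big(\theta\int_0^t(r_1(X(s),v_1(s,X(s)),v^*_2(X(s)))-\rho_1)\,ds\big)\,\psi_1(X(t))$ up to $t\wedge\tau_R$, where $\tau_R=\inf\{t\ge 0:\|X(t)\|>R\}$. The drift of $Y$ equals $Y$ times the bracket $\big[\tfrac12{\rm trace}(a\nabla^2_x\psi_1)+\langle b(x,v_1,v^*_2),\nabla_x\psi_1\rangle+\theta r_1\psi_1-\theta\rho_1\psi_1\big]$, and \eqref{coupledergodichjb} asserts that this bracket is $\ge 0$ for every $v_1$ and $=0$ when $v_1=v^*_1$. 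Hence the stopped process $Y(\cdot\wedge\tau_R)$ is a submartingale in general and a martingale at $v^*_1$, so that
\[
\psi_1(x)\ \le\ E_x^{v_1,v^*_2}\big[Y(t\wedge\tau_R)\big],\qquad \psi_1(x)\ =\ E_x^{v^*_1,v^*_2}\big[Y(t\wedge\tau_R)\big].
\]

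To remove the cutoff I would let $R\to\infty$, and this is precisely where (A5) enters. Since $h\ge 0$, (A5) gives $\mathcal L W^\beta\le\hat c$, so It\^o--Dynkin applied to $W^\beta$ yields $\sup_R E_x^{v_1,v^*_2}[W(X(t\wedge\tau_R))^\beta]\le W(x)^\beta+\hat c\,t<\infty$ for the fixed $\beta>1$. Because $\psi_1\in O(W)$ one has $Y(t\wedge\tau_R)\le K e^{\theta\|r_1\|_\infty t}W(X(t\wedge\tau_R))$, and the uniform $L^\beta$-bound just obtained renders $\{Y(t\wedge\tau_R)\}_R$ uniformly integrable; as $t\wedge\tau_R\to t$ a.s.\ with continuous integrand, passing to the limit gives
\[
\psi_1(x)\ \le\ E_x^{v_1,v^*_2}\!\Big[e^{\theta\int_0^t(r_1-\rho_1)\,ds}\psi_1(X(t))\Big],\qquad
\psi_1(x)\ =\ E_x^{v^*_1,v^*_2}\!\Big[e^{\theta\int_0^t(r_1-\rho_1)\,ds}\psi_1(X(t))\Big].
\]
I expect this uniform-integrability/limit passage to be the main obstacle, (A5) having been introduced exactly to furnish the extra moment.

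Finally I would convert these into ergodic-cost bounds and send $t=T\to\infty$. For the inequality, using $\psi_1\le K W$ and Lemma \ref{ergodicbound},
\[
\psi_1(x)\ \le\ K\,e^{-\theta\rho_1 T}E_x^{v_1,v^*_2}\!\Big[e^{\theta\int_0^T r_1\,ds}W(X(T))\Big]\ \le\ K\,[W(x)+cT]\,e^{-\theta\rho_1 T}E_x^{v_1,v^*_2}\!\Big[e^{\theta\int_0^T r_1\,ds}\Big],
\]
whence $\tfrac{1}{\theta T}\log E_x^{v_1,v^*_2}[e^{\theta\int_0^T r_1\,ds}]\ge\rho_1+O(T^{-1}\log T)$ and so $\rho^{v_1,v^*_2}_1(\theta,x)\ge\rho_1$. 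For the equality at $v^*_1$, continuity and positivity of $\psi_1$ together with the lower bound $\psi_1\ge 1/W(x_0)$ on $C_0$ yield a global constant $c_0>0$ with $\psi_1\ge c_0$; then $c_0\,E_x^{v^*_1,v^*_2}[e^{\theta\int_0^T r_1\,ds}]\le e^{\theta\rho_1 T}\psi_1(x)$ forces $\rho^{v^*_1,v^*_2}_1(\theta,x)\le\rho_1$, and with the reverse inequality already in hand, $\rho^{v^*_1,v^*_2}_1(\theta,x)=\rho_1$. The identical argument for player $2$ completes the proof and identifies $(\rho_1,\rho_2)$ as the Nash values.
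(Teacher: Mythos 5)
Your proposal is correct and follows essentially the same route as the paper: a verification argument off the coupled ergodic HJB system via the It\^o--Dynkin formula applied to $e^{\theta\int_0^{t\wedge\tau_R}(r_1-\rho_1)ds}\psi_1(X(t\wedge\tau_R))$, with (A5) controlling the localization $R\to\infty$, Lemma \ref{ergodicbound} converting the upper bound into $\rho^{v_1,v^*_2}_1\ge\rho_1$, and the positive lower bound $\psi_1\ge k>0$ (from $1/W(x_0)$ on $C_0$ plus continuity on the compact $C^c_0$) yielding $\rho^{v^*_1,v^*_2}_1\le\rho_1$. The only cosmetic differences are that you remove the cutoff by a uniform $L^\beta$-bound/uniform-integrability argument (and retain the exact martingale identity at $v^*_1$), where the paper instead splits on $\{T>\tau_R\}$ with H\"older's inequality and uses Fatou's lemma for the one-sided bound it needs; both are sound.
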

\begin{proof} Let $X$ be the process (\ref{statedynamics}) corresponding to $(v_1, v^*_2), v_1 \in 
{\mathcal M}_1$ with initial condition $x$. Now using It$\hat{{\rm o}}$-Dynkin formula we obtain
\interdisplaylinepenalty=0
\[
\begin{array}{ll}
\displaystyle{
E^{v_1, v^*_2}_x \Big[ e^{\theta\int^{T \wedge \tau_R}_0 (r_1(X(t), v_1(t, X(t)), v^*_2(X(t))) - \rho_1) dt }
\psi_1(X(T \wedge \tau_R)) \Big]} &  \\
\displaystyle{ \  \  = \    E^{v_1, v^*_2}_x \Big[ \int^{T \wedge \tau_R}_0 
e^{\theta \int^t_0 (r_1(X(s), v_1(s, X(s)), v^*_2(X(s))) - \rho_1) ds } 
[{\mathcal L} \psi_1(X(t), v_1(t, X(t)), v^*_2(X(t))) }& \\
\ \ \ \  \  \  \  \  \  \  \ \  \  \  \ + \ \theta (r_1(X(t), v_1(t, X(t)), v^*_2(X(t))) - \rho_1)
\psi_1(X(t)) dt \Big] + \psi_1(x) & \\
\ \ \geq  \psi_1(x) . & 
\end{array}
\]
Hence
\interdisplaylinepenalty=0
\begin{eqnarray}\label{eq1theorem4.3}
\psi_1(x) & \leq & E^{v_1, v^*_2}_x \Big[ e^{\theta\int^{T \wedge \tau_R}_0 (r_1(X(t), v_1(t, X(t)), v^*_2(X(t))) -
 \rho_1) dt } W(X(T \wedge \tau_R)) \Big] \nonumber \\
& \leq & E^{v_1, v^*_2}_x \Big[ e^{\theta \int^{T \wedge \tau_R}_0 (r_1(X(t), v_1(t, X(t)), v^*_2(X(t))) -
 \rho_1) dt } W(X(T \wedge \tau_R)) I \{ T \leq \tau_R \} \Big] \\ \nonumber 
& & + e^{\theta (\|r_1\|_{\infty} - \rho_1)T} E^{v_1, v^*_2}_x \Big[W(X(T \wedge \tau_R)) I \{ T > \tau_R \}\Big].
\end{eqnarray}
Now using (A5), we get
\interdisplaylinepenalty=0
\begin{eqnarray*}
E^{v_1,v^*_2}_x \Big[ W(X(T \wedge \tau_R)) I \{T > \tau_R\} \Big] & \leq & 
\Big[ E^{v_1,v^*_2}_x\Big[ W^\beta(X(T \wedge \tau_R)) \Big] \Big]^{\frac{1}{\beta}} 
\Big[P^{v_1, v^*_2}_x (T > \tau_R) \Big]^{1 - \frac{1}{\beta}} \\
& \leq & (W^\beta (x) + \hat{c} T)^{\frac{1}{\beta}} 
\Big[P^{v_1, v^*_2}_x (T > \tau_R) \Big]^{1 - \frac{1}{\beta}},
\end{eqnarray*}
where $\beta > 1$ as in (A5) and the second inequality follows by (A5) by an application of 
It$\hat{\rm o}$-Dynkin formula to $W^\beta(X(t))$.  Hence
\begin{equation}\label{eq2theorem4.3}
\lim_{R \to \infty} E \Big[ W(X(T \wedge \tau_R)) I \{ T > \tau_R \} \Big] \ = \ 0.
\end{equation}
From (\ref{eq1theorem4.3}), (\ref{eq2theorem4.3}) and Lemma \ref{ergodicbound}, we get
\interdisplaylinepenalty=0
\begin{eqnarray}\label{eq3theorem4.3}
\psi_1(x) & \leq & E^{v_1, v^*_2}_x \Big[ e^{\theta\int^{T}_0 (r_1(X(t), v_1(t, X(t)), v^*_2(X(t)))- \rho_1) dt } 
W(X(T)) \Big] \nonumber \\
& \leq & e^{-\theta \rho_1 T} (W(x) + c T) 
E^{v_1, v^*_2}_x \Big[ e^{\theta \int^{T}_0 r_1(X(t), v_1(t, X(t)), v^*_2(X(t)))  dt }  \Big] .
\end{eqnarray}
Now by taking logarithm in (\ref{eq3theorem4.3}), then  divide by $\theta T$ and by letting 
$T \to \infty$ we get
\[
\rho_1 \leq \limsup_{T \to \infty} \frac{1}{\theta T} \log 
E^{v_1, v^*_2}_x \Big[ e^{\theta \int^{T}_0 r_1(X(t), v_1(t, X(t)), v^*_2(X(t)))  dt }  \Big] .
\]
i.e.,
\begin{equation}\label{eq4theorem4.3}
\rho_1 \leq \inf_{v_1 \in {\mathcal M}_1} \rho^{v_1, v^*_2}_1(\theta, x).
\end{equation}
Now let $X$ denote  the process (\ref{statedynamics}) corresponding to $(v^*_1, v^*_2)$ 
with initial condition $x$. Now using It$\hat{\rm o}$-Dynkin formula as above we get
\interdisplaylinepenalty=0
\begin{eqnarray*}
\psi_1(x) & = & E^{v^*_1, v^*_2}_x \Big[ e^{\theta\int^{T \wedge \tau_R}_0 (r_1(X(t), v^*_1( X(t)), v^*_2(X(t)))
 - \rho_1) dt } \psi_1(X(T \wedge \tau_R)) \Big] \\
& \geq & k E^{v^*_1, v^*_2}_x \Big[ e^{\theta\int^{T \wedge \tau_R}_0 (r_1(X(t), v^*_1(X(t)), v^*_2(X(t))) -
 \rho_1) dt }\Big] , 
\end{eqnarray*}
where
\[
k \ = \ \min \Big\{ \min_{y \in C^c_0} \psi_1(y), \, \frac{1}{W(x_0)} \Big\} > 0.
\]
Using Fatou's lemma, we obtain
\[
\psi_1(x) \geq k E^{v^*_1, v^*_2}_x \Big[ e^{\theta\int^{T}_0 (r_1(X(t), v^*_1(X(t)), v^*_2(X(t))) -
 \rho_1) dt }\Big]. 
\]
Hence it follows that
\[
\rho_1 \geq \limsup_{T \to \infty} \frac{1}{\theta T} \log 
E^{v^*_1, v^*_2}_x \Big[ e^{\theta \int^{T}_0 r_1(X(t), v^*_1( X(t)), v^*_2(X(t)))  dt }  \Big] .
\]
i.e.
\begin{equation}\label{eq5theorem3.4}
\rho_1 \geq \rho^{v^*_1, v^*_2}(\theta, x), \ x \in \mathbb{R}^d.
\end{equation}
Combining (\ref{eq4theorem4.3}) and (\ref{eq5theorem3.4}) we get
\[
\rho_1 = \rho_1^{v^*_1, v^*_2}(\theta, x) \leq \rho_1^{v_1, v^*_2}(\theta, x) \ \forall \ v_1 \in 
{\mathcal M}_1, \, x \in \mathbb{R}^d.
\]
A symmetric argument implies 
\[
\rho_2 = \rho_2^{v^*_1, v^*_2}(\theta, x) \leq \rho_2^{v^*_1, v_2}(\theta, x) \ \forall \ v_2 \in 
{\mathcal M}_2, \, x \in \mathbb{R}^d.
\]
This completes the proof.
\end{proof}

\section{Conclusion} In this paper we have established the existence of  a pair of Nash equilibrium strategies for risk sensitive stochastic games for diffusion process for two types of cost criteria. First we have established the the existence of  a pair of Nash equilibrium strategies for $\alpha$-discounted cost criterion in the class of eventually stationary Markov strategies under (A1) and (A2). For ergodic cost criterion we have proved the the existence of  a pair of Nash equilibrium strategies under a Lyapunov type stability assumption and a small cost condition and (A1), (A2). Both cases (A2) plays a crucial role in our analysis. It will be interesting to study the same problem without (A2). Finally we have chosen the same risk-sensitive parameter $\theta$ for both players for the sake of notational simplicity. The extension to the case when different players choose different risk-sensitive parameters would be routine. In fact our analysis extends to the case where player 1 is risk-averse and selects a risk-aversion parameter $\theta_1>0$ where as player 2 is risk-seeking and chooses a risk seeking parameter $\theta_2<0$. Finally we conclude with a remark on the zero-sum case. For the zero-sum case $$\bar{r}_1(x,u_1,u_2)+\bar{r}_2(x,u_1,u_2)=0$$ for all $x\in \mathbb{R}^d, \;u_1 \in U_1, \;u_2 \in U_2$. This does not imply that the sum of the risk-sensitive discounted (or ergodic) costs is zero. This is due to the multiplicative nature of the evaluation criterion. Thus in the zero-sum case, if we set $$r_1=-r_2=r,$$ then for $\theta>0$, player 1 is risk-averse whereas player 2 is risk-seeking. Thus the zero-sum case has to be necessarily  studied via Nash equilibria. The standard concepts like values, saddle-point equilibria etc will not be meaningful in the multiplicative cost criterion. In \cite{BG}, however a particular type of (non-standard) zero-sum risk-sensitive stochastic differential game has been studied which is not a special case of non-zero sum case studied in this paper. In \cite{BG} it is assumed that $r_1=r_2=r$. Player 1 tries to minimizes his risk-sensitive discounted (or ergodic) cost whereas player 2 tries to maximize the same. In this scenario, the notation of value, saddle-point equilibria etc make sense which have been analyzed through appropriate Hamilton-Jacobi-Isaacs (HJI) equations in \cite{BG}.

\end{document}